\DeclareFontFamily{U}{wncy}{}
\DeclareFontShape{U}{wncy}{m}{n}{<->wncyr10}{}
\DeclareSymbolFont{mcy}{U}{wncy}{m}{n}
\DeclareMathSymbol{\Sha}{\mathord}{mcy}{"58}
\DeclareMathOperator{\norm}{Norm}
\DeclareMathOperator{\Disc}{Disc}
\DeclareMathOperator{\Nm}{Nm}
\DeclareMathOperator{\Pic}{Pic}
\DeclareMathOperator{\divis}{div}
\DeclareMathOperator{\GL}{GL}
\DeclareMathOperator{\PGL}{PGL}
\DeclareMathOperator{\PSp}{PSp}
\DeclareMathOperator{\Hom}{Hom}
\DeclareMathOperator{\Sel}{Sel}
\newcommand{\sep}{{\operatorname{sep}}}
\newcommand{\Z}{{\mathbb Z}}
\newcommand{\ZZ}{{\mathbb Z}}
\newcommand{\Q}{{\mathbb Q}}
\newcommand{\QQ}{{\mathbb Q}}
\newcommand{\FF}{{\mathbb F}}
\newcommand{\GG}{{\mathbb G}}
\newcommand{\RR}{{\mathbb R}}
\newcommand{\PP}{{\mathbb P}}
\newcommand{\sC}{\mathcal{C}}
\newcommand{\sO}{\mathcal{O}}
\newcommand{\Crst}{\sC_{rst}}
\newcommand{\Jrst}{\sJ_{rst}}
\newcommand{\tCrst}{\widetilde{\sC}_{rst}}
\newcommand{\tJrst}{\widetilde{\sJ}_{rst}}
\newcommand{\txi}{\tilde{\xi}}
\newcommand{\tPhi}{\tilde{\Phi}}
\newcommand{\tPsi}{\tilde{\Psi}}
\newcommand{\sJ}{\mathcal{J}}
\newcommand{\krn}{\Sigma}
\newcommand{\C}{C}
\newcommand{\J}{J}
\newcommand{\tJ}{{\widetilde\J}}
\newcommand{\tC}{\widetilde{\mathcal{\C}}}
\newcommand{\tA}{{\widetilde{A}}}
\newcommand{\tG}{{\widetilde{G}}}
\newcommand{\tH}{{\widetilde{H}}}
\newcommand{\tT}{{\widetilde{T}}}
\newcommand{\tlambda}{{\tilde{\lambda}}}
\newcommand{\tq}{{\tilde{q}}}
\newcommand{\tx}{{\tilde{x}}}
\newcommand{\K}{\mathcal{K}}
\newcommand{\tK}{\tilde{\K}}
\newcommand{\F}{\mathcal{F}}
\newcommand{\sA}{\mathcal{A}}
\newcommand{\tphi}{\tilde{\phi}}
\newcommand{\tgamma}{\tilde{\gamma}}
\newcommand{\JacC}{{\hbox{Jac}_{\lower.5pt\hbox{$_\sC$}}}}
\newcommand{\JacF}{{\hbox{Jac}_{\lower.5pt\hbox{$_\F$}}}}
\newcommand{\Gal}{\mathrm{Gal}}
\newcommand{\Res}{\mathrm{Res}}
\newcommand{\res}{\mathrm{res}}
\newcommand{\kbar}{\overline{k}}
\newtheorem{thm}{Theorem}
\newtheorem{lemma}[thm]{Lemma}
\newtheorem{cor}[thm]{Corollary}
\theoremstyle{definition}
\newtheorem{example}[thm]{Example}
\newtheorem{rmk}[thm]{Remark}
\theoremstyle{definition}
\theoremstyle{remark}
\newcommand{\adj}[1]{#1^\text{adj}}
\title{Descent via $(3,3)$-isogeny on Jacobians of genus $2$ curves}
\author{N.\ Bruin}
\address{Department of Mathematics, Simon Fraser University,
Burnaby, BC, CANADA, V5A 1S6}
\email{nbruin@cecm.sfu.ca}
\author{E.V.\ Flynn}
\address{Mathematical Institute, University of Oxford, 24--29 St.\ Giles,
Oxford OX1 3LB, United Kingdom}
\email{flynn@maths.ox.ac.uk}
\author{D.\ Testa}
\address{Mathematics Institute, Zeeman Building, University of Warwick, 
Coventry CV4 7AL, United Kingdom}
\email{D.Testa@warwick.ac.uk}
\subjclass{Primary 11G30; Secondary 11G10, 14H40}
\keywords{Higher Genus Curves, Jacobians,
Shafarevich-Tate Group, Class Groups}
\thanks{The first author is partially supported
by an NSERC grant. The second and third authors are partially
supported by EPSRC grant EP/F060661/1.}
\date{1 January, 2014}
\begin{document}

\begin{abstract}
We give parametrisation of curves~$\C$ of genus~2
with a maximal isotropic $(\ZZ/3)^2$ in $\J[3]$,
where $\J$ is the Jacobian variety of~$\C$, and develop the 
theory required to perform descent via $(3,3)$-isogeny.
We apply this to several examples,
where it can shown that non-reducible Jacobians have
nontrivial $3$-part of the Tate-Shafarevich group. 
\end{abstract}

\maketitle

\section{Introduction}\label{sec:intro}

In this article we consider curves $\C$ of genus~$2$ over a field~$k$ of characteristic not~2 or~3, that have special structure in the $3$-torsion of their Jacobians $\J$. In particular, we consider the situation where $\J[3](k)$ contains a group $\Sigma(k)$ of order $9$. As we show in Section~\ref{sec:threetors}, such a curve $\C$ can be given by a model of the form
\begin{equation}\label{eq:genus2tors3}
y^2 = F(x) = G(x)^2 + \lambda H(x)^3,
\end{equation}
where $G(x)$ is cubic and $H(x)$ is quadratic in~$x$. The divisor
$D = [ (x, G(x)) + (x', G(x')) - \kappa ]$, where $x,x'$ are the roots of~$H(x)$
and~$\kappa$ is a canonical divisor, represents a point in $\J[3](k)$, as can be
seen from the fact that $3D$ is linearly equivalent to the divisor of the
function $y-G(x)$.

The curves of interest to us can be expressed as such a model in
\emph{several} ways. As we show in Lemma~\ref{L:trivial_pairing}, the Weil
pairing of the $3$-torsion points can be easily expressed in terms of the
corresponding polynomials $G(x)$ and $H(x)$. This allows us to fully describe
the genus $2$ curves that have a subgroup $\Sigma(k)\subset \J[3](k)$ of size
$9$ on which the Weil pairing is trivial.

In Section~\ref{sec:level} we phrase the question of classification of such
curves in terms of \emph{partial level structures} on principally polarized
abelian surfaces. The relevant moduli space is
$\sA_2(\Sigma)$, with $\Sigma=\ZZ/3\times\ZZ/3$. In Section~\ref{sec:param} we
determine a genus $2$ curve $\Crst$ over $k(r,s,t)$ such that a sufficiently
general curve of the type we are interested in, can be obtained by specializing
$r,s,t$. Our construction identifies $k(r,s,t)$ with the function field of the
moduli space $\sA_2(\Sigma)$, thereby giving a particularly explicit proof of
its rationality. Furthermore, we observe that if $\J$ is equipped with a
$\Sigma$-level structure, then the quadratic twist $\J^{(d)}$ is naturally
equipped with a $\Sigma^{(d)}$ level structure. Thus we find that
$\sA_2(\Sigma)$ and $\sA_2(\Sigma^{(d)})$ are naturally isomorphic.

Since $\Sigma\subset \J[3]$ is maximally isotropic with respect to the Weil pairing, we have that $\tJ=\J/\Sigma$ is also principally polarized and in fact has a $\Sigma^\vee$-level structure. In Section~\ref{sec:isog} we identify a curve $\tCrst$ such that its Jacobian $\tJrst$ is $\Jrst/\Sigma$. In fact, we observe that $\Sigma^\vee=\Sigma^{(-3)}$ and hence that the quadratic twist $\tCrst^{(-3)}\simeq \sC_{r's't'}$, where $(r',s',t')=\psi_0(r,s,t)$ is a birational transformation that we explicitly determine. While the final formulas we find are quite manageable, the proof that they are correct requires some significant computation.

In Theorem~\ref{thm:Amtx} and Corollary~\ref{cor:magic} we observe some
remarkable relations between the models for $\Crst$ and $\tCrst$. We also
identify the natural action of $\PGL_2(\FF_3)$ on $\sA_2(\Sigma)$, as well as
the involution $\Crst\to \tCrst^{(-3)}$, as automorphisms of $k(r,s,t)$.

In Section~\ref{sec:descent} we get to the original motivation of this paper. If
we take $\J$ to be a Jacobian of a genus $2$ curve with a $\Sigma$ level
structure over a number field $k$, then it is particularly easy to compute
interesting information about $\J(k)$ and $\Sha(\J/k)[3]$ via $(3,3)$-isogeny
descent, using ideas from \cite{schaefer:sel} and \cite{BPS:quartic}. This
allows us to give examples of various absolutely simple abelian surfaces with
interesting structures in $\Sha(\J/k)[3]$, see Examples~\ref{E:-3,-3,-3},
\ref{E:-2,1,2}, \ref{E:2,-1,-2}.


\section{Three-torsion on genus two Jacobians}\label{sec:threetors}

Let $k$ be a field of characteristic different from $2,3$ and let  $\C$ 
be a smooth projective curve of genus~$2$ over~$k$ given by an affine model
\[ \C\colon y^2=F(x),\]
where $F(x)\in k[x]$ is of degree $6$ (this only (mildly) restricts 
the admissible $\C$ if $k$ is a finite field of~$5$ elements). Let $\J$ 
be the Jacobian of $\C$.
The group $\J(k)$ is isomorphic to the group 
of divisor classes of $k$-rational degree~$0$ divisors on~$\C$. Since $\C$ is of genus $2$, 
every degree~$0$ class contains a representative of the form
\[ D-\kappa,\]
where $D$ is an effective divisor of degree $2$ and $\kappa$ is an 
effective canonical divisor. Furthermore, for any non-principal class, 
the divisor $D$ is unique. For $\kappa$ we have choice, since effective 
canonical divisors are exactly the fibers of the hyperelliptic 
double cover $x\colon \C \to \PP^1$. We write $\iota\colon \C\to \C$ 
for the hyperelliptic involution, i.e., $\iota(x,y)=(x,-y)$.

\begin{lemma}\label{L:disjoint}
Let $\krn\subset \Pic(\C/k)[3]$ be a subgroup of size $9$. 
Then $\krn$ can be generated by a pair of divisors
  \[D_1-\kappa_1,D_2-\kappa_2\]
where $D_1,D_2$ are effective divisors of degree $2$ and $\kappa_1,\kappa_2$ 
are effective canonical divisors, with the supports of 
$D_1,D_2,\kappa_1,\kappa_2$ pairwise disjoint. A fortiori, we can 
ensure that $x_*(D_1)$ and $x_*(D_2)$ are disjoint.
\end{lemma}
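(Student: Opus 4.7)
First I would establish uniqueness of effective representatives: for every nonzero $e\in\krn$, Riemann--Roch on the genus-$2$ curve $\sC$ gives a unique effective divisor $D_e$ of degree $2$ with $[D_e-\kappa]=e$ for any effective canonical $\kappa$, since $[D_e]=e+\kappa\neq\kappa$ in $\Pic^2(\sC)$ has $h^0=1$. Because $\iota$ acts as $-1$ on $\Pic^0$, we have $D_{-e}=\iota D_e$, so the supports of $D_e$ and $D_{-e}$ are disjoint: otherwise $D_e$ would contain a pair $\{P,\iota P\}$ and hence coincide with a canonical divisor, forcing $e=0$. A parallel uniqueness argument---comparing $\iota D_e=W+\iota Q$ with the effective representative $2Q$ of $-e=[2Q-2W]$ when $D_e=W+Q$ with $W$ Weierstrass---shows that no $D_e$ with $e\neq 0$ contains a Weierstrass point.

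For each cyclic subgroup $L\subset\krn$ of order $3$ set $\tau_L:=x_*(D_e)\in\Sym^2\PP^1$ for any generator $e\in L$; this is well-defined since $x\circ\iota=x$. I would aim to exhibit two distinct lines $L_1,L_2$ with $\tau_{L_1}\cap\tau_{L_2}=\emptyset$, since then, choosing generators $e_i\in L_i$, the divisors $D_{e_1},D_{e_2}$ have supports disjoint on $\sC$ and, more strongly, disjoint $x$-images on $\PP^1$. If no such pair exists, a short combinatorial check on four two-element multisets in $\PP^1$ that pairwise meet forces a common point $t\in\PP^1$ lying in all four $\tau_L$; writing $x^{-1}(t)=\{R,\iota R\}$ (non-Weierstrass) and selecting, for each $L$, the generator $e_L$ with $D_{e_L}=R+Q_L$, one has $e_L=[Q_L-\iota R]$ and the four $Q_L$ are distinct. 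For any two chosen generators $e_1,e_2$ the element $e_1+e_2$ lies in a third line and equals $\pm e_3$: the ``$+$'' case translates to $Q_1+Q_2\sim Q_3+\iota R$, and uniqueness of effective degree-$2$ representatives forces $\{Q_1,Q_2\}=\{Q_3,\iota R\}$, either giving $D_{e_i}=\kappa$ or collapsing to $e_1=e_2$; the ``$-$'' case gives $Q_1+Q_2+Q_3\sim 3\iota R$, and combined with the 3-torsion constraint $3Q_L\sim 3\iota R$ and the identity $\iota Q\sim\kappa-Q$ it forces either $D_{e_\ell}=\kappa$ or $\kappa\sim 2\iota R$ (making $\iota R$ Weierstrass), again a contradiction. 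In every sub-case the configuration collapses.

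Finally, given generators $e_1,e_2$ satisfying $x_*(D_{e_1})\cap x_*(D_{e_2})=\emptyset$, I would take $\kappa_i=x^{-1}(t_i)$ for two distinct $t_1,t_2\in\PP^1(k)$ avoiding the (at most four) values $x(P)$ for $P\in\operatorname{supp}(D_{e_1})\cup\operatorname{supp}(D_{e_2})$; the condition $\operatorname{char}(k)\neq 2,3$ gives $|\PP^1(k)|\geq 6$, leaving at least two admissible values and ensuring $\kappa_1\neq\kappa_2$. The supports of $D_1,D_2,\kappa_1,\kappa_2$ are then pairwise disjoint on $\sC$, and the $\PP^1$-disjointness of $x_*(D_1),x_*(D_2)$ is inherited from the preceding step. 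The main obstacle is the rigidity argument: ruling out the hypothetical configuration where all four $\tau_L$ share a common point of $\PP^1$, which demands a systematic chase combining the additive structure of $(\ZZ/3)^2$, uniqueness of effective degree-$2$ representatives, and the 3-torsion relations $3e_L=0$.
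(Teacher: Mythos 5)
Your overall route---unique effective degree-$2$ representatives, the four cyclic subgroups of $\krn$ and the $x$-images of their representatives, then replacing the generating pair, with the fibres $\kappa_i$ chosen at the end using $\#\PP^1(k)\geq 6$---is essentially the argument in the paper (which passes to $\kbar$ and shows one of $(T_1,T_2)$, $(T_1,T_4)$, $(T_3,T_4)$ works). However, two of your steps have genuine holes. First, the purely combinatorial claim that four pairwise-meeting two-element multisets in $\PP^1$ must share a common point is false: $\{a,b\},\{a,b\},\{a,c\},\{b,c\}$ pairwise meet and have no common point. To reduce to your common-point configuration you must also dispose of the case where two of the $\tau_L$ coincide, and that is not combinatorics but group structure: if $\tau_{L_1}=\tau_{L_2}=\{a,b\}$ with $a\neq b$, then after adjusting signs one may write $D_{e_1}=P+Q$ and $D_{e_2}=P+\iota Q$, and one computes $e_1+e_2=[2P-\kappa]$, $e_1-e_2=[2Q-\kappa]$, so the remaining two lines have $\tau$'s $\{a,a\}$ and $\{b,b\}$, which are disjoint. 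This is exactly the configuration of Example~\ref{E:special_curve} and exactly the branch the paper's proof isolates ($T_4=[2P_1-\kappa]$, $T_3=[2Q_1-\kappa]$); your write-up silently assumes it cannot occur.

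Second, in the ``$+$'' branch of your rigidity argument the appeal to uniqueness of effective representatives fails precisely when $[Q_1+Q_2]$ is the canonical class, i.e.\ $Q_2=\iota Q_1$: uniqueness does not hold for $\kappa$, and the relation $Q_1+Q_2\sim Q_3+\iota R$ then forces only $Q_3=R$, so the third line is represented by $2R$. That is a consistent configuration at this stage, so ``the configuration collapses'' is not justified from the chosen pair alone; you must invoke the fourth line (for instance $e_1-e_2=[2Q_1-\kappa]$ has $x$-image $\{x(Q_1),x(Q_1)\}$, which cannot contain $t$) or repeat the dichotomy for a different pair. In fact the whole case split is avoidable: since the $Q_L$ are distinct and none equals $\iota R$, at most one line has $Q_L=R$, so you may choose two lines with $Q_1,Q_2\notin\{R,\iota R\}$; then $e_1-e_2=[Q_1+\iota Q_2-\kappa]$ with $Q_1+\iota Q_2$ non-canonical, hence the unique effective representative of its line, and its $x$-image $\{x(Q_1),x(Q_2)\}$ omits $t$, contradicting the common-point hypothesis at once. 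Your ``$-$'' branch can be made rigorous as sketched (it ends in $2Q\sim\kappa$ or $2\iota R\sim\kappa$, excluded by your Weierstrass-point step). With these two repairs the proposal does prove the lemma, by essentially the same mechanism as the paper.
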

\begin{proof}
First, note that we can choose $\kappa_1$ and $\kappa_2$ to be any fiber of
$x\colon \sC\to\PP^1$ over points in $\PP^1(k)$. Since $k$ has characteristic
$0$ or at least $5$, we know that $\#\PP^1(k)\geq 6$. Since $x_*(D_1+D_2)$ is
supported on at most $4$ rational points, we can choose $\kappa_1,\kappa_2$ with
disjoint support from $D_1,D_2$.

It remains to show that we can choose $D_1,D_2$ with disjoint support. Since
these divisors are uniquely determined, we lose no generality 
by assuming that $k$ is algebraically closed. Therefore, we assume 
that $\krn$ is generated by the classes
\[ T_1=[P_1+Q_1-\kappa_1] \text{ and } T_2=[P_2+Q_2-\kappa_2],\]
where $P_1,Q_1,P_2,Q_2\in \C(k)$. We want to ensure that $\{P_1,\iota
P_1,Q_1,\iota Q_1\}$ 
and $\{P_2,\iota P_2,Q_2,\iota Q_2\}$ are disjoint. If they are not, 
we can assume that $P_1=P_2$ and it follows that
\[T_3=T_1-T_2=[Q_1+\iota Q_2 -\kappa].\]
A straightforward computation shows that if $T_4=T_1+T_2=[P_4+Q_4-\kappa]$ 
has $P_4=P_1$ or $P_4=\iota P_1$ then
either $T_4=[2P_1-\kappa]$ and $T_3=[2Q_1-\kappa]$, so that 
choosing $T_3,T_4$ works, or $\langle T_1,T_2\rangle$ is not of size $9$.
Therefore, one of the choices $(T_1,T_2)$ or $(T_1,T_4)$ or $(T_3,T_4)$ 
satisfies our criteria.
\end{proof}

\begin{example}\label{E:special_curve}
At this point it may be worth noting that rather exceptional configurations 
of support for $3$-torsion do occur. For instance, for
\[\C\colon y^2=x^6+r x^3 +1\]
we see that $T_1=[(0,1)+\infty^+-\kappa]$ and $T_2=[(0,1)+\infty^--\kappa]$ 
are $3$-torsion points. In fact, it is straightforward to check 
that any genus 2 Jacobian with two independent $3$-torsion points 
such that the group generated by those $3$-torsion points is supported 
on only $4$ points of the curve must be isomorphic to a curve of this form.
\end{example}

\begin{lemma}\label{L:general_form} Let $\C$ be a curve of genus $2$. 
Then $\Pic(\C/k)[3]$ has a subgroup $\krn$ of size $9$ if and only 
if $\C$ admits a model of the form
\[ \C\colon y^2=F(x)=G_1(x)^2+\lambda_1 H_1(x)^3 = G_2(x)^2+\lambda_1 H_2(x)^3,\]
where $H_1,H_2,G_1,G_2,F\in k[x]$ and $\lambda_1,\lambda_2\in k^\times$ 
and $H_1,H_2$ are of degree $2$ and $\gcd(H_1,H_2)=1$.
\end{lemma}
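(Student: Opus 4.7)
The plan is to establish both directions by essentially converting between $3$-torsion points and cubic/quadratic factorisations of $F(x) - G(x)^2$.

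For the forward direction, I would first apply Lemma~\ref{L:disjoint} to produce generators of $\krn$ of the form $T_1 = [D_1 - \kappa_1]$ and $T_2 = [D_2 - \kappa_2]$ with pairwise disjoint supports. Pick any $\kappa$ that is a fiber of $x$ over a $k$-rational point of $\PP^1$ disjoint from $D_1 \cup D_2$ (available because $k$ has enough elements) and change coordinates so that this $\kappa$ becomes the fiber over $\infty$; after replacing $D_i$ by the unique effective degree-$2$ representative of $T_i + [\kappa]$, we may write $T_i = [D_i - \kappa]$ with $D_1,D_2$ in the affine chart and with disjoint $x$-projections. Since $3T_i = 0$, there is a function $f_i$ with $\divis(f_i) = 3D_i - 3\kappa$. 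By Riemann--Roch, $H^0(\C, 3\kappa)$ is spanned by $1,x,x^2,x^3,y$, so $f_i = G_i(x) - c_i y$ with $G_i$ of degree $\leq 3$. The case $c_i = 0$ is excluded because then $\divis(f_i)$ would be $\iota$-invariant, whereas $3D_i - 3\kappa$ is not (the support of $D_i$ does not meet the Weierstrass locus, since otherwise $D_i$ would be canonical, forcing $T_i = 0$); after rescaling I normalise $c_i = 1$.

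Applying $\iota$ gives $\iota^* f_i = G_i(x) + y$, so
\[
  (G_i(x) - y)(G_i(x) + y) \;=\; G_i(x)^2 - F(x)
\]
is a polynomial in $x$ whose divisor on $\C$ is $3(D_i + \iota D_i) - 6\kappa$. Since $D_i + \iota D_i$ is pulled back from the divisor on $\PP^1$ cut out by the quadratic $H_i(x)$ with roots at $x_*(D_i)$, we obtain $G_i(x)^2 - F(x) = -\lambda_i H_i(x)^3$ for some $\lambda_i \in k$. The constant $\lambda_i$ cannot vanish, else $y = \pm G_i(x)$ on $\C$, which would make $\C$ rational. The polynomials $H_1, H_2$ are coprime because $x_*(D_1)$ and $x_*(D_2)$ were chosen disjoint. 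This yields the required double representation of $F$.

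For the converse, given such a model, the identity $y^2 - G_i(x)^2 = \lambda_i H_i(x)^3$ shows that $y - G_i(x)$ has divisor of the form $3D_i - 3\kappa$, where $D_i$ is the effective degree-$2$ divisor of points $(x_0, G_i(x_0))$ for $x_0$ a root of $H_i$; hence $T_i := [D_i - \kappa]$ lies in $\Pic(\C/k)[3]$. To finish, I must check that $T_1, T_2$ are $\FF_3$-independent, i.e. $T_1 \neq 0, \pm T_2$. The relation $T_i = 0$ would force $D_i$ to be canonical, forcing $H_i \equiv 0$; the relation $T_1 = T_2$ would force $D_1 = D_2$ by uniqueness of effective representatives, contradicting $\gcd(H_1,H_2) = 1$; the relation $T_1 = -T_2$ would force $D_1 + D_2 \in |2\kappa|$, and hence $D_1 + D_2$ would be the pullback of a degree-$2$ divisor on $\PP^1$, which is necessarily $\iota$-invariant. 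A careful case analysis using $\gcd(H_1,H_2) = 1$, and excluding Weierstrass points from the support of $D_i$ (again because $T_i \neq 0$), rules this out. The main obstacle is precisely this last step: handling the potential degeneracies where $H_i$ has a double root or $D_i$ meets the Weierstrass locus, where the clean "two distinct points with distinct $x$-coordinates" picture fails and one must argue via the local structure of $\iota$-invariant divisors.
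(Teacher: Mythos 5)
Your overall strategy is the same as the paper's: identify nontrivial $3$-torsion classes $[D-\kappa]$ with decompositions $F=G^2+\lambda H^3$ via a function $y-G(x)$ of divisor $3D-3\kappa$, use Lemma~\ref{L:disjoint} to force $\gcd(H_1,H_2)=1$ in the forward direction, and run the construction backwards for the converse; the paper's own proof is just a terser version of this. However, two of your steps are not sound as written. In the forward direction you exclude $c_i=0$ by asserting that the support of $D_i$ avoids the Weierstrass locus ``since otherwise $D_i$ would be canonical.'' That justification is wrong: a divisor $P+Q$ containing exactly one Weierstrass point is not canonical, and even granting the claim it does not by itself rule out $\iota$-invariance (the canonical divisors $P+\iota P$ avoid the Weierstrass locus too). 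What you actually need is only that $D_i$ is not $\iota$-invariant, and that follows directly from $T_i\neq 0$: an $\iota$-invariant effective divisor of degree $2$ is either canonical or a sum of two Weierstrass points, and in both cases $2[D_i-\kappa]=0$, which combined with $3T_i=0$ gives $T_i=0$, a contradiction.

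The second issue is the converse, where you explicitly leave the degenerate cases open (``the main obstacle is precisely this last step''). These close easily and should not be left as an obstacle. For $T_i\neq 0$: since $D_i$ is cut out by $\{y-G_i,H_i\}$, it can only be canonical if $H_i$ has a double root $a$ and $G_i(a)=0$; but then $F=G_i^2+\lambda_i H_i^3$ has a double root at $a$, contradicting smoothness of $\C$. (Your intermediate assertion that $D_i$ canonical ``forces $H_i\equiv 0$'' is not correct.) For $T_1\neq\pm T_2$ you do not need the $|2\kappa|$ detour or any analysis of $\iota$-invariant divisors: once $T_1,T_2\neq 0$, each class has a unique effective degree-$2$ representative, so $T_1=T_2$ forces $D_1=D_2$ and $T_1=-T_2$ forces $D_1=\iota D_2$; either way $x_*(D_1)=x_*(D_2)$, hence $H_1$ and $H_2$ are proportional, contradicting $\gcd(H_1,H_2)=1$. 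With these two repairs your argument is complete and coincides with the paper's.
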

\begin{proof}
Suppose that $T\in\Pic(\C/k)[3]$ is non-trivial. We assume 
that $T=[D-\kappa_\infty]$, where $\kappa_\infty$ is the fiber 
above $x=\infty$ and $D$ is an effective divisor with support disjoint 
from $\kappa_\infty$.
Then $x_*(D)$ can be described by $H(x)=0$, where $H(x)\in k[x]$ is a 
quadratic monic polynomial. Since $3T$ is the principal class, there 
is a function $g\in k(\C)$ such that
\[\divis(g)= 3D-3\kappa_\infty\]
and it is straightforward to check that we must have $g=y-G(x)$ for 
some $G(x)\in k[x]$, with $\deg(G)\leq 3$. It follows that
\[y^2=F(x)=G(x)^2+\lambda H(x)^3,\]
and conversely, that any such decomposition of $F(x)$ gives rise to 
a $3$-torsion point $T=[D-\kappa]$, where $D$ is the effective 
degree $2$ divisor described by the vanishing of $\{y-G(x),H(x)\}$. 
The class $2T=-T$ is then described by the vanishing of $\{y+G(x),H(x)\}$.

The existence of $\krn$ as stated in the lemma would lead to $4$ 
decompositions of the type described and simple combinatorics shows 
that not all quadratic polynomials $H(x)$ featuring in them can be equal. 
This proves the lemma.
\end{proof}

The torsion subgroup scheme $\J[3]$ comes equipped with a non-degenerate, 
bilinear, alternating \emph{Weil pairing}
\[e_3\colon \J[3]\times \J[3] \to \mu_3,\]
where $\mu_3$ is the group scheme representing the cube roots of unity.

We say that a subgroup $\krn\subset \J[3]$ is \emph{isotropic} if $e_3$ restricts 
to the trivial pairing on $\krn$. If $\krn$ is of degree $9$ then $\krn$ 
is \emph{maximal isotropic}, meaning $\krn$ is not properly contained 
in an isotropic subgroup. The nondegeneracy of $e_3$ then induces an 
isomorphism $\J[3]/\krn\to \krn^\vee=\Hom(\krn,\mu_3)$. In fact, we have 
a direct sum decomposition $\J[3]=\krn\times \krn^\vee$.

In particular, if $\krn=\Z/3\times\Z/3$ then we have 
$\krn^\vee=\mu_3\times\mu_3$.

\begin{lemma} Let
\[ C\colon y^2=F(x)=G_1(x)^2+\lambda_1 H_1(x)^3 
= G_2(x)^2+\lambda_2 H_2(x)^3\]
be a genus $2$ curve with $H_1,H_2$ quadratic monic polynomials 
and $H_1\neq H_2$. For $i \in \{1,2\}$, let $D_i=\{y-G_i(x),H_i(x)\}$ and 
let $T_i=[D_i-\kappa]\in \Pic(\C/k)[3]$. Then
\[e_3(T_1,T_2)=\frac{\lambda_2}{\lambda_1}
\frac{\Res(G_2-G_1,H_2)}{\Res(G_1-G_2,H_1)}\]
\end{lemma}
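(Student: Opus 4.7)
The plan is to compute the pairing directly from its divisorial definition: if $A_1, A_2$ are degree-zero divisors representing $T_1, T_2$ with disjoint supports, and $\divis(f_i)=3A_i$, then
\[ e_3(T_1,T_2) = \frac{f_1(A_2)}{f_2(A_1)}. \]
Let $D_i$ denote the effective degree-$2$ divisor cut out by $\{H_i(x)=0,\ y=G_i(x)\}$ and let $\kappa_\infty=\infty^++\infty^-$. The identity $(y-G_i)(y+G_i)=F-G_i^2=\lambda_i H_i^3$, together with a short leading-term analysis at infinity, shows that $\divis(y-G_i)=3(D_i-\kappa_\infty)$. Thus I would take $A_i=D_i-\kappa_\infty$ and $f_i=y-G_i$.

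Since $A_1$ and $A_2$ share the points of $\kappa_\infty$, the next step is to replace $A_2$ by a linearly equivalent divisor with disjoint support. Pick $x_0\in k$ generic, so that $\kappa_0:=x^*(x_0)$ avoids $D_1$, $D_2$ and $\infty^\pm$. Using $\divis(x-x_0)=\kappa_0-\kappa_\infty$, the modified function
\[ \tilde f_2 := \frac{y-G_2(x)}{(x-x_0)^3} \]
satisfies $\divis(\tilde f_2)=3(D_2-\kappa_0)$, and $A_1$ is disjoint from $D_2-\kappa_0$. The pairing becomes
\[ e_3(T_1,T_2) = \frac{f_1(D_2)\,\tilde f_2(\kappa_\infty)}{f_1(\kappa_0)\,\tilde f_2(D_1)}. \]

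It remains to evaluate these four factors. Since $H_1,H_2$ are monic, the evaluations at the $D_i$ give resultants: $f_1(D_2)=\Res(G_2-G_1,H_2)$ and $\tilde f_2(D_1)=\Res(G_1-G_2,H_1)/H_1(x_0)^3$. At $\kappa_0$ the two points have $y$-coordinates $\pm\sqrt{F(x_0)}$, so $f_1(\kappa_0)=G_1(x_0)^2-F(x_0)=-\lambda_1 H_1(x_0)^3$. The most delicate term is $\tilde f_2(\kappa_\infty)$: in the uniformizers $u=1/x$, $v=y/x^3$ one computes $\tilde f_2=v-g_3^{(2)}+O(u)$, where $g_3^{(2)}$ is the leading coefficient of $G_2$; at $\infty^\pm$ one has $v=\pm\sqrt{f_6}$ (with $f_6$ the leading coefficient of $F$), so $\tilde f_2(\kappa_\infty)=(g_3^{(2)})^2-f_6=-\lambda_2$, the last equality coming from comparing leading terms in $F=G_2^2+\lambda_2 H_2^3$.

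Substituting, the factors $H_1(x_0)^3$ cancel, the two minus signs cancel, and the stated identity drops out. The main obstacle is the analysis at infinity: one must check that $y-G_i$ really has divisor $3(D_i-\kappa_\infty)$ (the nonvanishing of the leading term at $\infty^\pm$ uses $\lambda_i\neq 0$) and one must evaluate $\tilde f_2(\kappa_\infty)$ in the correct local uniformizer rather than trying to plug into $f_i$ itself. The identity $\lambda_i=f_6-(g_3^{(i)})^2$ is exactly what produces the factor $\lambda_2/\lambda_1$ in the answer, so it deserves to be isolated as a lemma on leading coefficients.
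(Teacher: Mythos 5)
Your proposal is correct and follows essentially the same route as the paper: both compute $e_3(T_1,T_2)=f_1(A_2)/f_2(A_1)$ with functions built from $y-G_i$ divided by the cube of a linear polynomial so as to shift the canonical divisor, and both extract the factor $\lambda_2/\lambda_1$ from the identity $G_i^2-F=-\lambda_i H_i^3$. The only difference is cosmetic: the paper moves \emph{both} base canonical divisors to finite fibers $x=r_1$, $x=r_2$, which keeps the computation symmetric and sidesteps the local analysis at infinity, whereas you keep one base divisor at $\kappa_\infty$ and recover $-\lambda_2$ from the leading coefficients via the uniformizers $u=1/x$, $v=y/x^3$ — a valid but slightly more delicate step.
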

\begin{proof}
We choose canonical divisors $\kappa_1$ and $\kappa_2$ above $x=r_1$ 
and $x=r_2$ respectively, such that the divisors
\[D_1-\kappa_1\text{ and } D_2-\kappa_2\]
have disjoint support. We have the functions
\[g_i=\frac{y-G_i}{(x-r_i)^3} \text{ with }\divis(g_i)=3D_i-3\kappa_i.\]
We can compute the pairing via
\[e_3(T_1,T_2)=\frac{g_1(D_2-\kappa_2)}{g_2(D_1-\kappa_1)},\]
where evaluating a function on a divisor is defined 
as $g(\sum n_P P)=\prod g(P)^{n_P}$.
Evaluating $y-G_1(x)$ at $D_2$ means evaluating $G_2(x)-G_1(x)$ at the roots 
of $H_2(x)$, yielding $\Res(G_2-G_1,H_2)$. Evaluating $(x-r_1)$ at $D_2$ yields
$H_2(r_1)$. Noting that $\kappa_2=(r_2,\sqrt{F(r_2)})+(r_2,-\sqrt{F(r_2)})$, 
we see that
\[g_1(\kappa_2)=\frac{G_1(r_2)^2-F(r_2)}{(r_2-r_1)^6}
=\lambda_1\frac{-H_1(r_2)^3}{(r_2-r_1)^6}\]
and hence that
\[g_1(D_2-\kappa_2)=\frac{\Res(G_2-G_1,H_2)(r_2-r_1)^6}
{-\lambda_1H_1(r_2)^3H_2(r_1)^3}.\]
Symmetry yields the result stated in the lemma.
\end{proof}

We can characterize when $e_3(T_1,T_2)=1$ in terms of the polynomials
$G_i,H_i$ 
in the following way. First note that
\[G_2^2-G_1^2=\lambda_1 H_1^3-\lambda_2 H_2^3.\]
Writing $\alpha_1,\alpha_2,\alpha_3$ for the cube roots 
of $\lambda_2/\lambda_1$ in an algebraic closure of $k$, we find that
\[(G_2-G_1)(G_2+G_1)
=\lambda_1(H_1-\alpha_1 H_2)(H_1-\alpha_2 H_2)(H_1-\alpha_3 H_2).\]
It follows that the roots of the quadratic polynomials $H_1-\alpha_i H_2$ 
are the same as the roots of $G_2-G_1$ and $G_2+G_1$. The way in which 
they distribute determines the Weil pairing.

\begin{lemma}\label{L:trivial_pairing}
The pairing $e_3(T_1,T_2)=1$ if and only if none of the polynomials 
$(H_1-\alpha_i H_2)$ divide $G_2-G_1$.
\end{lemma}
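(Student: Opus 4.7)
Plan: The most direct approach is to evaluate the Weil pairing formula of the preceding lemma and determine precisely when the result equals~$1$. Since $H_1$ is monic of degree~$2$ and $G_1-G_2=-(G_2-G_1)$, we have $\Res(G_1-G_2,H_1)=\Res(G_2-G_1,H_1)$. Writing $R=G_2-G_1$ and $S=G_2+G_1$, the previous lemma then gives
\[
e_3(T_1,T_2)=\frac{\lambda_2}{\lambda_1}\cdot\frac{\Res(R,H_2)}{\Res(R,H_1)},
\]
while the factorisation displayed just before the statement of the lemma reads $R\cdot S=\lambda_1\prod_{i=1}^3(H_1-\alpha_i H_2)$.

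Expanding the two resultants in terms of the roots $\gamma$ of $R$ over $\bar k$ (so that $R=c\prod_\gamma(x-\gamma)$ and $\Res(R,H_j)=c^{2}\prod_\gamma H_j(\gamma)$), one gets
\[
\frac{\Res(R,H_2)}{\Res(R,H_1)}=\prod_\gamma\frac{H_2(\gamma)}{H_1(\gamma)}.
\]
Each root $\gamma$ of $R$ is also a root of $R\cdot S$, hence of some factor $H_1-\alpha_{i(\gamma)}H_2$; this gives $H_1(\gamma)=\alpha_{i(\gamma)}H_2(\gamma)$, and since in the generic case $\gcd(H_1,H_2)=1$ we have $H_2(\gamma)\neq 0$, so $H_2(\gamma)/H_1(\gamma)=\alpha_{i(\gamma)}^{-1}$. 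Combining,
\[
e_3(T_1,T_2)=\frac{\lambda_2}{\lambda_1}\prod_\gamma\alpha_{i(\gamma)}^{-1}.
\]

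The lemma then follows by inspecting the multiset of indices $i(\gamma)$. Generically, $R$ has degree~$3$ and the three quadratics $H_1-\alpha_iH_2$ are pairwise coprime, so among the six roots of $RS$ (three from $R$, three from $S$) each $\alpha_i$ accounts for exactly two. The assumption that no $H_1-\alpha_iH_2$ divides $R$ forces each index to occur exactly once, whence $\prod_\gamma\alpha_{i(\gamma)}=\alpha_1\alpha_2\alpha_3=\lambda_2/\lambda_1$ and $e_3(T_1,T_2)=1$. Conversely, if $H_1-\alpha_{i_0}H_2$ divides $R$, then $i_0$ appears with multiplicity~$2$ and exactly one other index $j\neq i_0$ appears once, so $\prod_\gamma\alpha_{i(\gamma)}=\alpha_{i_0}^2\alpha_j$; using $\alpha_{i_0}^3=\lambda_2/\lambda_1$ this simplifies to $e_3(T_1,T_2)=\alpha_{i_0}/\alpha_j$, which is a nontrivial cube root of unity since $\alpha_{i_0}\neq\alpha_j$.

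The main obstacle is the bookkeeping for non-generic configurations, which must be addressed because the lemma is stated without further restrictions on $\lambda_1,\lambda_2,H_1,H_2$. For instance, when $\lambda_1=\lambda_2$ one of the $H_1-\alpha_iH_2$ is linear rather than quadratic; when $\deg R<3$, or $\gcd(H_1,H_2)\neq 1$, or $R$ and $S$ share a root, the accounting of roots and indices $i(\gamma)$ has to be adjusted. In each such sub-case, one checks directly that the identity $\prod_\gamma\alpha_{i(\gamma)}^{-1}=\lambda_1/\lambda_2$ is equivalent to no $H_1-\alpha_iH_2$ dividing $R$; these reductions are routine but unilluminating.
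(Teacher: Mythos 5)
Your main computation is essentially the paper's own proof in a different notation: the paper writes $G_2-G_1=L_1(H_1-\alpha_1H_2)$ in the divisible case and $G_2-G_1=L_1L_2L_3$ with $L_i\mid H_1-\alpha_iH_2$ otherwise, and evaluates the pairing by multiplicativity of resultants, which is exactly your identity $e_3(T_1,T_2)=\tfrac{\lambda_2}{\lambda_1}\prod_\gamma\alpha_{i(\gamma)}^{-1}$ organized as a product over the roots of $G_2-G_1$; both routes give $\alpha_{i_0}/\alpha_j$ in the one case and $1$ in the other. One step in your converse deserves a sentence: you need the leftover root of $G_2-G_1$ to receive an index $j\neq i_0$ (if it could be assigned $i_0$, the product would be $\alpha_{i_0}^3=\lambda_2/\lambda_1$ and the pairing would come out trivial). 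This follows from pairwise coprimality of the $H_1-\alpha_iH_2$ (a consequence of $\gcd(H_1,H_2)=1$), or, as in the paper, by cancelling $H_1-\alpha_{i_0}H_2$ from $(G_2-G_1)(G_2+G_1)=\lambda_1\prod_i(H_1-\alpha_iH_2)$, which shows that the remaining linear factor divides the product of the other two quadratics.

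The genuine problem is your final paragraph: the deferred sub-cases are not routine confirmations of the same equivalence, because in the case $\lambda_1=\lambda_2$ the equivalence as stated is false. There one cube root is $\alpha=1$, the factor $H_1-H_2$ has degree at most $1$, and it can divide $G_2-G_1$ without forcing two roots of $G_2-G_1$ onto a single conjugate factor. Concretely, the member $(r,s,t)=(2,1,1)$ of the family in Theorem~\ref{T:rst_family} has $H_1=(x+1)^2$, $H_2=x^2+x+2$, $\lambda_1=\lambda_2=4$ and $G_2-G_1=2(x^3-1)$; here $H_1-H_2=x-1$ divides $G_2-G_1$, yet $\Res(G_2-G_1,H_2)=\Res(G_1-G_2,H_1)=16$, so $e_3(T_1,T_2)=1$. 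Hence these cases cannot be dispatched as you claim; the honest resolution---and what the paper's proof tacitly does when it takes $\deg L_1=1$ and $G_2-G_1=L_1L_2L_3$---is to read the lemma in the configuration $\gcd(H_1,H_2)=1$ and $\lambda_1\neq\lambda_2$, so that the three polynomials $H_1-\alpha_iH_2$ are pairwise coprime quadratics; this is the generic setting in which the lemma is applied in Section~\ref{sec:param}. Note that $\lambda_1\neq\lambda_2$ already forces $\deg(G_2-G_1)=3$, and common roots of $G_2-G_1$ and $G_2+G_1$ cause no difficulty once the three quadratics are pairwise coprime, so of the exceptional configurations you list only these two are genuine, and one of them is an actual exception to the statement rather than a routine verification.
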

\begin{proof}
First suppose that $G_2-G_1=L_1 (H_1-\alpha_1 H_2)$. Then
\[e_3(T_1,T_2)
=\frac{\lambda_2}{\lambda_1}\frac{\Res(G_2-G_1,H_2)}{\Res(G_1-G_2,H_1)}
=\frac{\lambda_2}{\lambda_1}\frac{\Res((H_1-\alpha_1H_2)L_1,H_2)}
{\Res((H_1-\alpha_1H_2)L_1,H_1)}=\frac{\lambda_2}{\lambda_1\alpha_1^2}
\frac{\Res(L_1,H_2)}{\Res(L_1,H_1)}.
\]
Observe that $L_1$ must divide one of the other factors $(H_1-\alpha_j H_2)$, 
say for $j=2$. Therefore, $\Res(L_1,H_1)=\Res(L_1,\alpha_2H_2)$. 
Since $\deg(L_1)=1$ and $\lambda_2/\lambda_1=\alpha_1^3$ we obtain
\[e_3(T_1,T_2)=\alpha_1\frac{\Res(L_1,H_2)}{\Res(L_1,H_1)}
=\alpha_1\frac{\Res(L_1,H_2)}{\Res(L_1,\alpha_2H_2)}
=\frac{\alpha_1}{\alpha_2},\]
which is indeed a primitive cube root of unity.

In the remaining situation we have $G_2-G_1=L_1L_2L_3$, where, for $i \in
\{1,2,3\}$, 
the polynomial $L_i$ divides $H_1-\alpha_i H_2$, and hence $\Res(L_i,H_1)=\Res(L_i,\alpha_i H_2)$.  
We obtain that 
\[\Res(G_2-G_1,H_1)=\alpha_1\alpha_2\alpha_3\Res(L_1L_2L_3,H_2)
=\frac{\lambda_2}{\lambda_1}\Res(G_2-G_1,H_2),\]
which indeed implies that $e_3(T_1,T_2)=1$.
\end{proof}

\section{Level structure on principally polarized abelian surfaces}
\label{sec:level}

Let $n$ be a positive integer.  We need the analogues of the modular curves $Y(n)$, $Y_1(n)$ and $Y_0(n)$ 
for genus $2$ Jacobians. The theory is most conveniently stated in terms 
of slightly more general objects, namely \emph{principally polarized 
abelian surfaces} (PPAS). These include direct products of elliptic curves, 
equipped with the product polarization. The advantage is that the 
category of PPAS is closed under polarized isogenies.

Let $k$ be a field of characteristic not dividing $6n$. We 
write $\sA_2(1)$ for the moduli space of isomorphism classes of PPAS 
over $k$.  The coarse moduli space $\sA_2(1)$ is a $3$-dimensional 
variety.  Note however that, for any extension $L$ of $k$, the set $\sA_2(1)(L)$ 
of $L$-rational points of $\sA_2(1)$ corresponds to 
$L$-rational isomorphism classes that need not contain an $L$-rational 
abelian surface; similarly two abelian surfaces defined over $L$ that 
are isomorphic over $\kbar$ need not be isomorphic over $L$.

Let $\J$ be a principally polarized abelian surface over $k$. 
Then $\J[n](\kbar)$ has a non-degenerate bilinear alternating Galois 
covariant Weil-pairing $\J[n]\times\J[n]\to\mu_n$. A \emph{partial 
level $n$ structure} on $\J$ consists of a finite \'etale group 
scheme $\krn$ with a pairing $\krn\times\krn \to\mu_n$ and an injective 
homomorphism $\krn\to J[n]$ that is compatible with the pairings. An 
isomorphism between $(\J,\krn\to \J[n])$ and $(\J',\krn\to \J'[n])$ is 
an isomorphism $\phi:\J\to \J'$ of PPAS such that the composition 
of $\krn\to J[n]$ with $\phi$ yields $\krn\to J'$.
We write $\sA_2(\krn)$ for the moduli space of PPAS
equiped with a partial level $n$-structure involving $\krn$.  If we take for
example a sample abelian surface $\J_0$ and set $\krn=\J_0[n]$ 
then $\sA_2(\krn)$ is the moduli space of PPAS with full level $n$ structure.

We will work with the case $n=3$ and $\krn=\Z/3\times\Z/3$ with 
trivial pairing (i.e., $\krn$ is \emph{isotropic} with respect to its pairing).
Note that the automorphism group of $\krn$ is the full $\GL_2(\FF_3)$. 
However, since on every abelian surface $\J$ the involution 
$-1\colon \J\to\J$ induces the automorphism $-1$ on level structures, 
we deduce that $(\J,\krn\to \J)$ and $(\J,-\krn\to \J)$ are isomorphic as level $n$ 
structures on $\J$.  Therefore we obtain an action of $\PGL_2(\FF_3)$ on $\sA_2(\krn)$.

In our case $\krn\subset \J[3]$ is \emph{maximal isotropic}, so the nondegeneracy 
of $e_3$ yields that $J[3]/\krn\simeq \krn^\vee=\Hom(\krn,\mu_3)$. The 
fact that $\krn$ is maximal isotropic also means that the principal polarization 
on $\J$ induces a principal polarization on the isogenous abelian 
surface $\J/\krn$. Furthermore, the Weil pairing determines an 
injection $\krn^\vee\to (\J/\krn)[3]$. Thus we see that our maximal isotropic 
level structure leads to an isogeny $\J\to \J/\krn$, inducing an isomorphism 
$\sA_2(\krn)\to\sA_2(\krn^\vee)$, whose inverse is induced by the dual isogeny, 
using the principal polarizations to identify $\J$ and $\J/\krn$ with their duals.

The negation automorphism on $\J$ also gives rise to a \emph{quadratic 
twisting} operation. We write $\J^{(d)}$ for the twist of $\J$ by the 
quadratic character of discriminant $d$. A level structure under twisting 
gives rise to a twisted level structure $\krn^{(d)}\to \J^{(d)}$, 
where $\krn^{(d)}$ is the quadratic twist of $\krn$. This gives rise 
to an isomorphism $\sA_2(\krn)\to\sA_2(\krn^{(d)})$.

We finish by making some observations about the covering degrees of 
the various moduli spaces of level structures we have introduced. Let us 
write $\sA_2(3)$ for the space corresponding to full level $3$ data (say, 
for $\Z/3\times\Z/3\times\mu_3\times\mu_3$ with obvious pairing). This has 
a full $\PSp_4(\FF_3)$ acting on it. The subgroups fixing a maximal 
isotropic space are all conjugate and have the group structure $(\Z/3)^3$. 
There are $40$ of them. We see that $\sA_2(3)\to\sA_2(\krn)$ is finite of 
degree $27$. The cover $\sA_2(\krn)\to\sA_2(1)$ is of degree $40\cdot 24$, 
determined by the choice of isotropic space times the size of $\PGL_2(\FF_3)$.

The variety $\sA_2(3)$ is very well-known. Its completion is
the Burkhardt quartic, defined by the homogeneous equation
\begin{equation}\label{eq:burk}
t^4 - t(w^3+x^3+y^3+z^3) + 3wxyz = 0,
\end{equation}
as is described in~\cites{Burkhardt1891,hoffmanweintraub:siegel}. In particular, it, and its finite quotients, are
absolutely irreducible.

Note that most PPAS are Jacobians of genus $2$ curves in the sense that outside 
a proper closed subvariety of $\sA_2(1)$, any point can be represented by 
the Jacobian of a curve of genus $2$. In what follows we will determine 
a genus $2$ curve $\Crst$ over the function field of $\sA_2(\krn)$ together with a level structure on its Jacobian
that makes it correspond to the generic point on $\sA_2(\krn)$.

\section{Parametrisation of genus~2 curves with a maximal isotropic 
$(\ZZ/3)^2$ in $\J[3]$}\label{sec:param}

Let $k$ be a field of characteristic different from $2,3$.
In this section we derive a genus $2$ curve $\Crst$ over $k(r,s,t)$ with 
two non-trivial divisor classes $T_1,T_2\in \Pic(\Crst/k(r,s,t))[3]$ 
with $T_1\neq \pm T_2$ and $e_3(T_1,T_2)=1$. This specifies a $\krn$-level 
structure on the Jacobian $\Jrst$ of $\Crst$. In the process we will see 
that any sufficiently general Jacobian with a $\krn$-level structure 
over $k$ can be obtained via specialization from $\Jrst$. This identifies 
$k(r,s,t)$ with the function field of $\sA_2(\krn)$, verifying that this 
moduli space is indeed rational.

We use the notation from Lemmas~\ref{L:general_form} 
and \ref{L:trivial_pairing}. We consider the algebra 
$k[\alpha]=k[t]/(t^3-\lambda_2/\lambda_1)$, which is only a field 
if $\lambda_2/\lambda_1$ is not a cube in $k$, but at least will always be 
an \'etale algebra because $\lambda_1,\lambda_2\neq 0$. We write 
$\Nm=\norm_{k[x,\alpha]/k[x]}$. We have 
\[(G_2-G_1)(G_2+G_1)=\lambda_1 \Nm(H_1-\alpha H_2).\]
From Lemma~\ref{L:trivial_pairing} it follows that
\begin{equation}\label{E:H_factorization}
H_1-\alpha H_2=LM \text{ for some } L,M\in k[\alpha,x]
 \end{equation}
and that for some $c\in k^\times$ we have
\[\begin{aligned}
G_2-G_1&=\tfrac{1}{c} \Nm(M),\\
G_2+G_1&=c\lambda_1 \Nm(L),\\
\end{aligned} \quad \text{ and } \quad
\begin{aligned}
G_1&=\tfrac{1}{2}(c\lambda_1 \Nm(L)-\tfrac{1}{c} \Nm(M)),\\
G_2&=\tfrac{1}{2}(c\lambda_1 \Nm(L)+\tfrac{1}{c} \Nm(M)).
\end{aligned}
\]
We observe that
\[(c y)^2=(c G_1)^2+(c^2\lambda_1) H_1^3=(c G_2)^2+(c^2\lambda_2) H_2^3,\]
so by adjusting the values of $\lambda_1,\lambda_2$ we can assume $c=1$. 
This shows that the isomorphism class of $\sC$ is determined 
by $\lambda_1,\lambda_2,L,M$.

Furthermore, if $k$ has sufficiently many elements we can ensure that $L$ 
does not vanish at $x=\infty$ and that $L$ is monic, so that
\[L=x-(l_0+\alpha l_1+\alpha^2l_2).\]
The fractional linear transformation
\[x\mapsto \frac{l_1x+tl_2^2-l_0l_1}{l_2x+l_1^2-l_0l_2},\]
with determinant $l_1^3-tl_2^3$ sends $l_0+l_1\alpha+l_2\alpha^2$ 
to $\alpha$. One can check that if $l_1^3-tl_2^3=0$ then either $F$ has 
a repeated root, and hence our data does not specify a genus $2$ curve, 
or $l_1=l_2=0$. In the latter case, $L$ is already defined over $k$, so 
via $x\mapsto x-l_0$ we can move its root to $0$. This shows that via 
a fractional linear transformation, we can assume that
\begin{equation}\label{E:general_LM}
L=x-u\alpha\text{ and }M
=(c_0+c_1\alpha+c_2\alpha^2)x-(m_0+m_1\alpha+m_2\alpha^2),
\end{equation}
where $u=0$ corresponds to the case where $l_1=l_2=0$. 
In order for $LM$ to be of the form $H_1-\alpha H_2$, with $H_1,H_2\in k[x]$, 
we need
\begin{equation}\label{E:parameter_relations}
c_2=0,\ \ m_1u=0,\ \ m_2=-c_1u.
\end{equation}
We set $\lambda_1=s, \lambda_2=st$ and observe that $c^2F(x)$ is homogeneous 
with respect to the following gradings.
\[\begin{array}{r|rrrrrrrrrrr}
&s&t&c_0&c_1&c_2&m_0&m_1&m_2&u&x&c\\
\hline
\text{weights}
&3&0&-1&-1&-1&0&0&0&1&1&-3\\
&0&3&0&-1&-2&0&-1&-2&-1&0&0\\
&0&0&0&0&0&1&1&1&1&1&-3
\end{array}\]
We can solve \eqref{E:parameter_relations} via either $u=0$ or via $m_1=0$. 
For $u=0$ we find that $H_1$ and $H_2$ both have a root at $x=0$. By 
Lemma~\ref{L:disjoint} we know we can avoid this case by changing the 
basis for the $3$-torsion subgroup. Thus we see that any $\PGL_2(\FF_3)$-orbit 
has a representative that avoids this locus.

For the other case we take the affine open described by
\[(s,t,c_0,c_1,c_2,m_0,m_1,m_2,u)=(s,t,1,-1,0,-r,0,0,1),\]
leading to
\[H_1=x^2+rx+t,\;H_2=x^2+x+r,\;\lambda_1=s,\;\lambda_2=st.\]
It is instructive to record which cases are excluded by the choices 
that we make here. We use the gradings to scale $c_0=1,c_1=-1,u=1$, so any 
curves that require any of these parameters to be $0$ are ruled out. The 
gradings immediately show that setting any of $c_0,c_1,u$ to $0$ yields 
at most a $2$-dimensional family of curves. For $u=0$, we have seen 
that $H_1,H_2$ have a common root. Furthermore, the gradings show that 
this forms at most a $2$-dimensional family up to isomorphy. For $c_0=0$, 
we see that $H_1$ has a root at $\infty$ and for $c_1=0$ we see that $H_2$ 
has a root at infinity. Note that if $u\neq0$, we have applied a linear 
transformation to normalize the form of $L$, so $\infty$ has geometric 
meaning. For instance, Example~\ref{E:special_curve} describes 
a $1$-dimensional family of curves that lie in this locus.

To summarize, we have established the following theorem, where we have 
scaled $s$ by $4$ to avoid some denominators in coefficients.

\begin{thm}\label{T:rst_family}
Let $k$ be a field of characteristic different from $2,3$ and suppose that $(\C,T_1,T_2)$ 
consists of a genus $2$ curve $\C$ over $k$ and $T_1,T_2\in\Pic(\sC/k)[3]$ 
such that $\#\langle T_1,T_2\rangle=9$ and $e_3(T_1,T_2)=1$. If the specified 
data is sufficiently general then $(\C,T_1,T_2)$ is isomorphic to a suitable 
specialization of $r,s,t$ in the family described by the following data.
\[\begin{aligned}
H_1&=x^2 + rx + t\\
\lambda_1&=4s\\
G_1&=(s-st - 1)x^3 + 3s(r-t)x^2 + 3sr(r-t)x - st^2 + sr^3 + t\\[0.5em]
H_2&=x^2+x+r\\
\lambda_2&=4st\\
G_2&=(s-st + 1)x^3 + 3s(r-t)x^2 + 3sr(r-t)x - st^2 + sr^3 - t\\[0.5em]
H_3&=sx^2 + (2sr -st - 1)x + sr^2\\
\lambda_3&=4t/(st+1)^2\\
G_3&=\big((s^2t^2 - s^2t + 2st + s + 1)x^3 
+ (3s^2t^2 - 3s^2tr + 3st + 3sr)x^2 \\
&+ (3s^2t^2r - 3s^2tr^2 + 3str + 3sr^2)x 
+ s^2t^3 - s^2tr^3 + 2st^2 + sr^3 + t\big)/(st+1)\\[0.5em]
H_4&=(str - st - sr^2 + sr + r)x^2 + (st^2 - str - st - sr^3 + 2sr^2 + t)x 
+ st^2 - str^2 - str + sr^3 + t\\
\lambda_4&=4st/\big(
st^2 - 3str + st + sr^3 + t
\big)^2\\
G_4&=\big(
(s^2t^3 - 3s^2t^2r - s^2tr^3 + 6s^2tr^2 - 3s^2tr + s^2t - s^2r^3 
+ 2st^2 - 3str + 2st - sr^3 + t)x^3\\
&+ (3s^2t^3 - 6s^2t^2r^2 - 3s^2t^2 + 9s^2tr^3 - 3s^2tr^2 + 3s^2tr 
- 3s^2r^4 + 3st^2 - 6str^2 + 3str)x^2\\
&+ (-3s^2t^3r + 6s^2t^3 - 9s^2t^2r + 3s^2tr^4 + 3s^2tr^3 + 3s^2tr^2 
- 3s^2r^5 - 3st^2r + 6st^2 - 3str^2)x\\
&- s^2t^4 + 3s^2t^3r + s^2t^3 - 6s^2t^2r^2 + 3s^2tr^4 + s^2tr^3 
- s^2r^6 - 2st^3 + 3st^2r + st^2 - 2str^3 - t^2
\big)/\\
&(st^2 - 3str + st + sr^3 + t).
\end{aligned}
\]
Here $\Crst\colon y^2=F_{rst}(x)=G_i^2+\lambda_i H_i^3$ for $i=1,2,3,4$ 
and $T_i=[\{H_i(x)=0,y-G_i(x)=0\}-\kappa]$ and $T_3=T_1+T_2$ and $T_4=T_1-T_2$.
\end{thm}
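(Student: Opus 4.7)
My plan for proving Theorem~\ref{T:rst_family} splits into two essentially independent parts: (a) universality, namely that a generic $(\C, T_1, T_2)$ with the stated properties is a specialization of $(\Crst, T_1, T_2)$, and (b) verification of the four explicit decompositions $F_{rst} = G_i^2 + \lambda_i H_i^3$ and their identification with $T_1, T_2, T_3 = T_1 + T_2$, and $T_4 = T_1 - T_2$.

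Part (a) is essentially carried out in the preceding discussion of this section. Starting from a general $(\C, T_1, T_2)$ with $e_3(T_1, T_2) = 1$, Lemmas~\ref{L:general_form} and~\ref{L:trivial_pairing} produce two decompositions of $F$ together with a factorization $H_1 - \alpha H_2 = LM$ in $k[\alpha,x]$, from which $G_1, G_2$ are recovered via the displayed formulas in terms of $\Nm(L), \Nm(M)$ and $\lambda_1$. A fractional linear transformation normalizes $L$ to $x - u\alpha$; Lemma~\ref{L:disjoint} combined with the $\PGL_2(\FF_3)$-action on $\krn$ lets us avoid the exceptional locus $u = 0$; and the three gradings in the weights table let us fix $(c_0, c_1, u) = (1, -1, 1)$. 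After enforcing~\eqref{E:parameter_relations}, the only remaining free parameters are $(r, s, t) = (-m_0, \lambda_1, \lambda_2/\lambda_1)$, so the induced map from $\operatorname{Spec} k(r,s,t)$ to the moduli space $\sA_2(\krn)$ is dominant. Since $\sA_2(\krn)$ is absolutely irreducible (as a finite quotient of the Burkhardt quartic), dominance implies that a dense set of isomorphism classes arises by specialization, which is the content of the universality claim.

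For part (b), computing $G_1$ and $G_2$ is mechanical once $L = x - \alpha$ and $M = (1-\alpha)x + r - \alpha^2$ are pinned down. Using $\alpha^3 = t$ gives $\Nm(L) = x^3 - t$ and a cubic $\Nm(M)$; choosing the splitting $G_2 - G_1 = 2\Nm(L)$ and $G_2 + G_1 = 2s\Nm(M)$ (so that $\lambda_1 = 4s$, which accounts for the $s \mapsto 4s$ rescaling noted in the statement) recovers the $G_1, G_2$ displayed in the theorem, and $G_1^2 + 4sH_1^3 = G_2^2 + 4stH_2^3$ becomes a direct polynomial identity. To obtain $(H_3, G_3, \lambda_3)$ and $(H_4, G_4, \lambda_4)$ I would run Cantor composition on the Mumford representations $(H_i, G_i \bmod H_i)$ of $T_1, T_2$, taking respectively the sum and the difference. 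The resulting monic quadratic in each case, after rescaling to the non-monic forms $H_3, H_4$ displayed in the statement, picks up the denominators $st + 1$ for $T_3$ and $st^2 - 3str + st + sr^3 + t$ for $T_4$; the scalars $\lambda_3, \lambda_4$ are then read off as the leading coefficient of the cubic $(F_{rst} - G_i^2)/H_i^3$.

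The main obstacle is the sheer bulk of the final verification for $i = 3$ and especially $i = 4$, where $G_4$ is a cubic in $x$ with large coefficients and $\lambda_4$ has a squared denominator. I would perform this as one symbolic computation in a computer algebra system, verifying $F_{rst} = G_i^2 + \lambda_i H_i^3$ for all four values of $i$ in a single pass. As an independent sanity check I would then verify via Lemma~\ref{L:trivial_pairing} that $e_3(T_i, T_j) = 1$ for all pairs $i, j \in \{1,2,3,4\}$; this is forced by construction, but provides useful redundant confidence that the bookkeeping of Cantor sums and differences has been carried out correctly.
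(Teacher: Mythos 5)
Your proposal is correct and follows essentially the same route as the paper: part (a) reproduces the derivation of Section~\ref{sec:param} (factoring $H_1-\alpha H_2=LM$, normalizing $L$ by a fractional linear transformation, using the gradings and Lemmas~\ref{L:disjoint} and~\ref{L:trivial_pairing} to reduce to the affine chart $(r,s,t)$), which is exactly the argument the paper summarizes as the proof. Part (b), the explicit recovery of $G_1,G_2$ from $\Nm(L),\Nm(M)$ and the Cantor-composition/CAS verification of the $i=3,4$ data, is a sound way to carry out the computation the paper leaves implicit.
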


For future reference we note that
\begin{equation}\label{eq:discF}
\Disc(F_{rst})=
-2^{12}3^6\delta_1^3\delta_2^3\delta_3\delta_4^3\delta_5\delta_6^3\delta_7^3,
\end{equation}
where
\begin{equation}\label{eq:discFfactors}
\begin{aligned}
    \delta_1 &= s\\
    \delta_2 &= t\\
    \delta_3 &= st + 1\\
    \delta_4 &= r^3 - 3rt + t^2 + t\\
    \delta_5 &= r^3s - 3rst + st^2 + st + t\\
    \delta_6 &= r^3s^2 - 3rs^2t - 3rs + s^2t^2 + s^2t + 2st + s + 1\\
    \delta_7 &= r^3s^2t + r^3s - 3rs^2t^2 - 3rst + s^2t^3 + s^2t^2 + 2st^2 + t. 
\end{aligned}
\end{equation}

\begin{rmk}\label{R:Crst_twists}
Note that $\Crst^{(d)}\colon dy^2=F_{rst}(x)$ has exactly the same property 
for $\krn^{(d)}$-level structure.
\end{rmk}

\section{The isogeny}\label{sec:isog}

We consider the curve $\Crst$ as defined in Theorem~\ref{T:rst_family} and its Jacobian $\Jrst$. In this section we determine a curve $\tCrst$ whose Jacobian $\tJrst$ is isogenous to $\Jrst$ via the isogeny $\Jrst\to \Jrst/\krn$.
We do so by determining the corresponding map between their Kummer surfaces.

\subsection{Isogenies and the quartic model of the Kummer surface}

Let $\J$ be a principally polarized abelian surface with theta divisor 
$\Theta_\J$ and suppose that $\krn\subset \J[3]$ is maximal isotropic. 
We consider $\tJ=\J/\krn$ and the isogeny $\phi\colon \J\to\tJ$. By 
\cite{milne:abvar}*{Proposition~16.8} there exists a principal polarization 
on $\tJ$ with theta divisor $\Theta_\tJ$ such that 
$\phi^*(\Theta_\tJ)=3\Theta_\J$.

The classical theory of theta divisors gives us that $\sO_\J(2\Theta_\J)$ 
is $4$-dimensional and that the induced map $\sJ\to\PP^3$ yields a 
quartic model of the Kummer surface $\K=\J/\langle -1\rangle$.
We write $\tK$ for the Kummer surface of $\tJ$. Similarly, the linear system 
$\sO_\tJ(2\Theta_\tJ)$ provides a quartic model of $\tK$.

\begin{rmk}
Note that this construction requires that we choose $2\Theta_\J$ to be 
defined over the base field $k$. If $k$ is not algebraically closed, 
then there might not exist a $k$-rational divisor $\Theta_\J$ in its 
class. For an abelian surface, however, $2\Theta_\J$ is always linearly 
equivalent to a $k$-rational divisor.
\end{rmk}

The isogeny induces a map $\K \to \tK$ in the following way. First 
note that $\phi^* \sO_\tJ(2\Theta_{\tJ})\subset \sO_\J(6\Theta_{\J})$. 
The involution $-1\colon \J\to\J$ induces a linear map on 
$\sO_\J(6\Theta_{\J})$. We write $\sO_\J(6\Theta_{\J})^+$ for the fixed 
subspace. Similarly, the translation action of $\krn$ on $\J$ induces 
a linear action on the same space. We write $\sO_\J(6\Theta_{\J})^\krn$ for 
its fixed space. It is straightforward to check that
\[
\phi^* \sO_\tJ(2\Theta_{\tJ}) 
= \sO_\J(6\Theta_\J)^\Sigma \cap \sO_\J(6\Theta_\J)^+.
\]
If $\xi_0,\xi_1,\xi_2,\xi_3$ forms a basis for $\sO_\J(2\Theta_\J)$ 
then $\sO_\J(6\Theta_\J)^+$ is generated by the cubic forms in 
$\xi_0,\xi_1,\xi_2,\xi_3$. Thus we see that the isogeny $\phi\colon \J\to\tJ$ 
induces map $\K\to\tK$ which, between the quartic models, is given 
by cubic forms.

\subsection{Choice of model for Kummer surfaces}

Let $\C$ be a curve of genus $2$ given by a model
\[
 \C\colon y^2=f_6x^6+f_5x^5+\cdots+f_0,
\]
with Jacobian $\J$. We follow \cite{prolegom}*{p.17} and choose 
a particular basis for $\sO_\J(\Theta_\J)$. 
We describe $\xi$ $=$ $\xi(D)$ $=$ $\xi_0,\ldots,\xi_3$ 
as functions on $\J$ in terms of a divisor class
$D = [(x_1,y_1)+(x_2,y_2) - \kappa]$ on $\C$ as follows.
\begin{equation}\label{eq:kummercoords}
\xi_0 = 1,\quad \xi_1 = x_1 + x_2,\quad  \xi_2 = x_1 x_2,\quad 
\xi_3 = \frac{\Phi(\xi_0,\xi_1,\xi_2) - 2y_1y_2 }{\xi_1^2-4\xi_0\xi_2}, 
\text{ where}
\end{equation}
\begin{equation*}
\Phi(\xi_0,\xi_1,\xi_2) = 
2f_0\xi_0^3 + f_1\xi_0^2\xi_1 + 2f_2\xi_0^2\xi_2
+ f_3\xi_0\xi_1\xi_2+ 2f_4\xi_0\xi_2^2 + 
f_5\xi_2^2\xi_1 + 2f_6\xi_2^3.
\end{equation*}
Note that for a Mumford representation 
$D=[\{x^2-\xi_1x+\xi_2, y-g_0-g_1x\}-\kappa]$ we
have $y_1y_2=g_0^2+g_0g_1\xi_1+g_1^2\xi_2$, so one can compute these 
coordinates readily from such a representation.

The quartic equation for the model of $\K$ arising from these 
coordinates has the shape
\[
\K\colon(\xi_1^2-4\xi_0\xi_2)\xi_3^2
+\Phi(\xi_0,\xi_1,\xi_2)\xi_3+\Psi(\xi_0,\xi_1,\xi_2)=0,
\]
where $\Psi(\xi_0,\xi_1,\xi_2)$ is a quartic form we do not need 
explicitly here. The important observation is that one can read 
off the coefficients $f_0,\ldots,f_6$ directly from $\Phi$ and 
thus recover $\C$ from it. 

In order to produce $\krn$-invariant forms on $\K$, we use biquadratic 
forms from \cite{prolegom}*{p.23}, arising from the addition structure 
on $\J$. For $i,j = 0,\ldots ,3$ we have forms 
$B_{i,j}\in k[\xi_0,\ldots,\xi_3,\xi_0',\ldots,\xi_3']$, biquadratic 
in $(\xi_0,\ldots,\xi_3)$ and $(\xi_0',\ldots,\xi_3')$ such that for 
points $D_1,D_2$ on $\J$ we have, as projective matrices,
\begin{equation}\label{eq:biquads}
\Bigl( \xi_i(D_1 + D_2)\, \xi_j(D_1 - D_2)
+ \xi_i(D_1 - D_2)\, \xi_j(D_1 + D_2) \Bigr)
= \Bigl( B_{ij}(\xi(D_1), \xi(D_2)) \Bigr). 
\end{equation}
We fix two points $T_1,T_2\in \J[3]$ that generate $\krn$, 
write $\xi(T_1),\xi(T_2)$ for the coordinate vectors of their 
images on the quartic model of $\K$ and define
\begin{equation}\label{eq:RS}
R_{ij}\bigl(\xi_0,\ldots,\xi_3 \bigr) 
= B_{ij}\bigl(\xi_0,\ldots,\xi_3, \xi(T_1) \bigr)\text{ and }
S_{ij}\bigl(\xi_0,\ldots,\xi_3 \bigr) 
= B_{ij}\bigl(\xi_0,\ldots,\xi_3, \xi(T_2) \bigr).
\end{equation}
We see that the cubic forms
\[R_{ijk}=\xi_iR_{jk}+\xi_j R_{ki}+ \xi_k R_{ij} 
\text{ with }i,j,k \in\{ 1,\ldots ,4\}\]
are invariant under translation by $T_1$ and similarly that the forms
\[S_{ijk}=\xi_iS_{jk}+\xi_j S_{ki}+ \xi_k S_{ij} 
\text{ with }i,j,k \in\{ 1,\ldots ,4\}\]
are invariant under translation by $T_2$. For $\C=\Crst$ the $R_{ijk}$ 
and $S_{ijk}$ each generate spaces of dimension $8$ that intersect 
in a space of dimension $4$. This intersection provides us with an 
explicit description of $\phi^*(\sO_\tJ(2\Theta_\tJ))$.

Generally, we expect $\tJ$ to be the Jacobian of a curve of genus $2$, 
say $\tC$.  We can try to find a basis $\txi_0,\ldots,\txi_3$ 
for $\phi^*(\sO_\tJ(2\Theta_\tJ))$ that is the pullback of a basis 
of the type described by \eqref{eq:kummercoords}. We can then read off 
the curve $\tC$, at least up to quadratic twist, from the resulting 
equation for $\tK$. The basis choice can largely be characterized 
by the order of vanishing of each $\xi_i$ at the identity element. 
This leads us to conclude that, up to scalar multiples, we should 
take the basis choice
\[
\begin{aligned}
\txi_0&=(1\xi_0+0\xi_1+0\xi_2)\xi_3^2+\cdots,\\
\txi_1&=(0\xi_0+1\xi_1+0\xi_2)\xi_3^2+\cdots,\\
\txi_2&=(0\xi_0+0\xi_1+1\xi_2)\xi_3^2+\cdots.\\
\end{aligned}
\]
The determination of $\txi_3$ is a little more involved. The resulting 
forms for $\tC=\tCrst$ are too voluminous to reproduce here, but we 
have made them available electronically at \cite{bft:electronic}. Via 
interpolation we find tentatively the following result.
\begin{thm}\label{T:Ctilde}
  Let $\Crst$ be as described by Theorem~\ref{T:rst_family}. 
Then $\tJrst=\Jrst/\krn$ is the Jacobian of the genus $2$ curve
\[\tCrst\colon -3y^2= \tG_4^2+\tlambda_4 \tH_4^3,\]
with
\[\begin{aligned}
\tG_4&=\Delta\,\big((s-st-1)x^3+3s(r-t)x^2+3rs(r-t)x+(r^3s - st^2 - t)\big),\\
\tH_4&=(r-1)(rs-st-1)x^2+(r^3s-2r^2s+rst+r-st^2+st-t)x-(r^2-t)(rs-st-1),\\
\tlambda_4&=4\Delta st,
\end{aligned}\]
where
\begin{equation}\label{eq:Delta}
\begin{split}
\Delta &= r^6 s^2 - 6 r^4 s^2 t - 3 r^4 s + 2 r^3 s^2 t^2 + 2 r^3 s^2 t +
3 r^3 s t
 + r^3 s + r^3 + 9 r^2 s^2 t^2 + 6 r^2 s t\\
&\quad - 6 r s^2 t^3 - 6 r s^2 t^2 - 9 r s
t
^2 - 3 r s t - 3 r t + s^2 t^4 + 2 s^2 t^3 + s^2 t^2 + 2 s t^3 + 3 s t^2 + t^2 +
 t
\end{split}
\end{equation}
\end{thm}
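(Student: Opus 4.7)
The plan is to apply the construction of the preceding subsection explicitly to $\J = \Jrst$ equipped with the maximal isotropic subgroup $\krn = \langle T_1, T_2 \rangle$: compute a basis of $\phi^*(\sO_{\tJ}(2\Theta_{\tJ}))$ using the cubic forms described there, substitute it into the quartic model of $\K$, and read off the Weierstrass data of $\tCrst$ from the resulting equation of $\tK$.

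Concretely, I would first use the Mumford representations $T_i=[\{H_i(x)=0,\ y-G_i(x)=0\}-\kappa]$ from Theorem~\ref{T:rst_family} together with \eqref{eq:kummercoords} to obtain the coordinate vectors $\xi(T_1)$ and $\xi(T_2)$ on the quartic Kummer surface of $\Jrst$. Plugging these into the biquadratic forms $B_{ij}$ yields $R_{ij}$ and $S_{ij}$ as in \eqref{eq:RS}, and hence the cubic forms $R_{ijk}$ and $S_{ijk}$ invariant under translation by $T_1$ and $T_2$ respectively. As noted in the preceding subsection, for $\C = \Crst$ the two $8$-dimensional spaces they span intersect in a $4$-dimensional subspace $V$, which is exactly $\phi^*(\sO_{\tJ}(2\Theta_{\tJ}))$.

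Next I would select a basis $\txi_0, \txi_1, \txi_2, \txi_3$ of $V$ by imposing the prescribed leading $\xi_3^2$-coefficients $\xi_0, \xi_1, \xi_2$ on $\txi_0, \txi_1, \txi_2$ (a linear problem inside $V$) and pinning down $\txi_3$ by the analogous vanishing-order condition at the identity. Substituting this basis into the quartic defining $\K$ and eliminating $\xi_0,\dots,\xi_3$ yields an equation
\[
(\txi_1^2 - 4\txi_0\txi_2)\txi_3^2 + \tPhi(\txi_0, \txi_1, \txi_2)\txi_3 + \tPsi(\txi_0, \txi_1, \txi_2) = 0
\]
for $\tK$, from whose coefficient form $\tPhi$ the sextic defining $\tCrst$ (up to quadratic twist) can be read off by inverting the recipe for $\Phi$ displayed after \eqref{eq:kummercoords}. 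The twist is pinned down to $-3$ via the identification $\krn^\vee \simeq \krn^{(-3)}$ from Section~\ref{sec:level}: the dual level structure naturally carried by $\Jrst/\krn$ is a $(-3)$-twist of the kind parametrised by our family, and this twist appears as the $-3$ on the left of $-3y^2 = \tG_4^2 + \tlambda_4 \tH_4^3$. A final check that the resulting sextic factors as $\tG_4^2 + \tlambda_4 \tH_4^3$ with the stated $\tG_4, \tH_4, \tlambda_4$, and with the $\Delta$ of \eqref{eq:Delta} as common denominator, completes the identification.

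The main obstacle is purely computational. The intermediate polynomials $R_{ijk}, S_{ijk}$ already have large coefficients in $r, s, t$; the linear algebra determining $V$ and the basis $\txi_i$, together with the elimination producing $\tPhi$, involves expressions too voluminous to record by hand. As the authors indicate, the compact formulas above are first obtained by interpolation, i.e., computing the answer at enough specialisations of $(r,s,t)$ to reconstruct the rational functions, with the full symbolic verification deferred to the electronic files at \cite{bft:electronic}.
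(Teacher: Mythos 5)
Your outline reproduces the derivation that the paper itself sketches before the statement (Kummer coordinates of $T_1,T_2$, the forms $R_{ijk}$, $S_{ijk}$, the $4$-dimensional intersection, the normalised basis $\txi_0,\dots,\txi_3$, reading off $\tPhi$), but it stops exactly where the paper's proof actually begins. Interpolation, as you invoke it, only \emph{discovers} the candidate formulas for $\tCrst$; it does not prove them, and ``the full symbolic verification deferred to the electronic files'' is precisely the step the authors state their computers could not perform directly. The content of the paper's proof is the workaround that makes the verification rigorous and feasible: since $\tK$ is irreducible (it is the Kummer surface of a Jacobian), it suffices to show that the degree~$12$ form obtained by substituting the cubic forms into the quartic $\tilde Q$ for $\tK$ is divisible by the quartic $Q$ for $\K$; long division in $\xi_3$ produces remainders $\rho_1\xi_3+\rho_0$ with $\rho_0,\rho_1\in\QQ[r,s,t,\xi_0,\xi_1,\xi_2]$; a Newton-polygon analysis bounds their degrees in $r,s,t$ by $102,67,36$; and checking divisibility at a grid of $103\times 68\times 37$ rational specialisations of $(r,s,t)$, combined with an interpolation argument, forces $\rho_0=\rho_1=0$ identically. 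Without this (or some equivalent degree-bound-plus-specialisation argument) your proposal has no proof of the stated identities, only a recipe for guessing them.

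A secondary gap is the twist. The Kummer computation determines $\tCrst$ only up to quadratic twist, and your appeal to $\krn^\vee\simeq\krn^{(-3)}$ is the right heuristic but not by itself a determination of the sign in $-3y^2=\tG_4^2+\tlambda_4\tH_4^3$: the paper settles this by exhibiting, in Lemma~\ref{L:psi0}, an explicit birational equivalence $\theta_0$ between $\sC_{r's't'}$ (with $(r',s',t')=\psi_0(r,s,t)$) and $\tCrst^{(-3)}$, which is a concrete check that can be carried out directly. You should either include such an explicit verification or explain how the $-3$ is otherwise forced; as written, ``this twist appears as the $-3$ on the left'' assumes what needs to be shown.
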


\subsection{Proof of Theorem~\ref{T:Ctilde}}

Since we have completely explicit descriptions of $\Crst$ and $\tCrst$, we can write down explicit quartic models
\[\begin{aligned}
\K\colon&Q=(\xi_1^2-4\xi_0\xi_2)\xi_3^2+\Phi(\xi_0,\xi_1,\xi_2)\xi_3+\Psi(\xi_0,\xi_1,\xi_2)=0\\
\tK\colon&\tilde{Q}=(\txi_1^2-4\txi_0\txi_2)\txi_3^2+\tPhi(\txi_0,\txi_1,\txi_2)\txi_3+\tPsi(\txi_0,\txi_1,\txi_2)=0.
\end{aligned}\]
Furthermore, we have an explicit description at \cite{bft:electronic} of the 
map between them by expressions that give $\txi_0,\ldots,\txi_3$ as cubic forms 
in $\xi_0,\ldots,\xi_3$. We already know that $\tK$ is irreducible, 
because it is the Kummer surface of a Jacobian. Therefore, to check 
if $\tK$ is indeed the image of $\K$, we only need to substitute the 
cubic forms into the equation for $\tK$ and check that the resulting 
degree $12$ equation is divisible by the quartic equation for $\K$. 
This is doable for specific specializations of $r,s,t$ 
in $\QQ$, but the computers at our disposal were not able to do this directly.

We note that $Q$ and $\tilde{Q}$ (after substitution of the cubic forms), 
are polynomials in $r,s,t,\xi_0,\ldots,\xi_3$, of degrees $10$ and $2$ 
in $\xi_3$ respectively. Hence, long division yields unique polynomials 
$\sigma\in \QQ[r,s,t,\xi_0,\ldots,\xi_3]$ 
and $\rho_0,\rho_1\in \QQ[r,s,t,\xi_0,\xi_1,\xi_2]$ such that
\[(\xi_1^2-4\xi_0\xi_2)^9 \tilde{Q} = \sigma Q +\rho_1 \xi_3 + \rho_0.\]
We want to prove that $\rho_1$ and $\rho_0$ are identically zero. To 
this end, we analyse the appropriate Newton polygons (or do the required 
computation using polynomials with coefficients truncated to the 
appropriate leading terms) to verify that $\rho_0,\rho_1$ are of degrees 
at most $102, 67, 36$ in $r,s,t$. Hence, if we check that $Q$ indeed 
divides $\tilde{Q}$ for a grid of $103\times 68\times 37$ values 
for $(r,s,t)$ then a straightforward interpolation argument shows that 
$\rho_0,\rho_1$ must indeed be identically $0$. This is something that 
can easily be verified by a computer in less than $3$ hours.

This computation shows that $\tK$ is indeed the Kummer surface of 
$\tJ=\J/\krn$ and hence that $\tCrst$ is correct up to quadratic 
twist. Recall from Section~\ref{sec:level}
that $\J/\krn$ comes equipped with a $\krn^\vee$-level structure. In our 
case, we have that $\krn=(\ZZ/3)^2$, so $\krn^\vee=(\mu_3)^2=\krn^{(-3)}$. 
Thus, it follows that $\tJrst^{(-3)}$ should have a $\krn$-level structure 
itself such that the isogeny corresponding to it brings us back 
to $\Jrst^{(-3)}$.

\begin{lemma}\label{L:psi0}
Let $\sC_{rst}$ be as in Theorem~\ref{T:rst_family},
let $\tCrst$ be as in Theorem~\ref{T:Ctilde}, and let
$\tCrst^{(-3)}$ be the quadratic twist of~$\tCrst$ by~$-3$,
using the notation in Remark~\ref{R:Crst_twists}.
Define $\psi_0$ by
\[ 
\psi_0(r,s,t) = 
\Bigl( \frac{-s(r-1)(r^2-t)(\delta_5-r)}{(rs-st-1)^2\delta_4},\
         \frac{(rs-st-1)^3 \delta_4^2}{st(r-1)^3\Delta},\
         \frac{s^2 (r-1)^3 (r^2-t)^3}{(rs-st-1)^3\delta_4^2} \Bigr).
\] 
Then $\sC_{r's't'}$ is birationally equivalent
to $\tCrst^{(-3)}$, where $(r',s',t') = \psi_0(r,s,t)$. Furthermore, as a rational map we have $\psi_0(\psi_0(r,s,t))=(r,s,t)$.
The $\Sigma^{(-3)}$ level structure induced on $\tJrst$ determines the kernel of the dual isogeny $\tJrst\to\Jrst$
\end{lemma}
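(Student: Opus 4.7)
The plan is to verify the three assertions of the lemma — the birational equivalence, the involution property, and the level-structure interpretation — in turn. Because $(\Sigma^{(-3)})^{(-3)} = \Sigma$, the quadratic twist $\tCrst^{(-3)}$ carries a $\Sigma$-level structure induced from the $\Sigma^\vee = \Sigma^{(-3)}$ structure that $\tJrst$ picks up as a quotient $\Jrst/\Sigma$. By the universality established in Theorem~\ref{T:rst_family}, $\tCrst^{(-3)}$ with this $\Sigma$-structure must be isomorphic to $\sC_{r's't'}$ for some triple $(r',s',t')$, and the content of the lemma is that $(r',s',t') = \psi_0(r,s,t)$.

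To pin down $(r',s',t')$, the strategy is to exhibit two decompositions $y^2 = G^2 + \lambda H^3$ of $\tCrst^{(-3)}$ of the type in Lemma~\ref{L:general_form}, one for each of two generators of the $\Sigma$-structure. The first decomposition is handed to us by Theorem~\ref{T:Ctilde}; a second can be produced either by transporting a second generator of $\Sigma \subset \Jrst[3]$ through the explicit isogeny formulas computed in the proof of that theorem, or directly from the Kummer-surface description. Applying a Möbius transformation in $x$ that normalises the two resulting quadratic polynomials to $(x^2 + r'x + t',\, x^2 + x + r')$, followed by a suitable scaling of $y$, exhibits the birational isomorphism $\tCrst^{(-3)} \to \sC_{r's't'}$ and reads off $(r',s',t')$ as rational functions of $(r,s,t)$; the remaining task is then to check that these functions agree with the components of $\psi_0$. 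Since both sides are explicit rational functions in three variables, this can be reduced, following the interpolation strategy used in the proof of Theorem~\ref{T:Ctilde}, to a finite verification on a sufficiently large grid of rational specialisations.

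The involution $\psi_0 \circ \psi_0 = \mathrm{id}$ is a rational identity in $(r,s,t)$ that can be verified symbolically; conceptually, it is forced by the facts that the dual of an isogeny with kernel $\Sigma$ is an isogeny with kernel $\Sigma^\vee$, that $\Sigma^\vee = \Sigma^{(-3)}$ as Galois modules, and that the composite equals $[3]_{\Jrst}$, so two applications of the construction (with the interleaved $(-3)$-twists) return to the original triple. The same considerations prove the final assertion: the dual isogeny $\hat\phi \colon \tJrst \to \Jrst$ has kernel $\Sigma^\vee = \Sigma^{(-3)}$, which is precisely the $\Sigma^{(-3)}$-level structure on $\tJrst$ coming from its realization as $\Jrst/\Sigma$. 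The genuine obstacle is the explicit verification in the preceding paragraph: although conceptually transparent, producing two decompositions of $\tCrst^{(-3)}$ and matching them with $\sC_{\psi_0(r,s,t)}$ involves formulas of roughly the same size as those encountered in Theorem~\ref{T:Ctilde} and will require substantial computer algebra.
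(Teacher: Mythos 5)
Your proposal is correct in substance and follows the same basic strategy as the paper: reduce everything to an explicit computation identifying $\tCrst^{(-3)}$ with $\sC_{r's't'}$, and then get the kernel statement from the fact that any two $\krn^{(-3)}$ level structures on $\tJrst$ differ by a unique automorphism of $\krn^{(-3)}$ (equivalently, generically the only copy of $\mu_3\times\mu_3$ in $\tJrst[3]$ is the kernel of the dual isogeny). The one real difference is in execution: the paper does not re-derive the normal form at all, but simply exhibits an explicit affine substitution $\theta_0$ (linear in $x$, a scaling in $y$) carrying $\sC_{r's't'}$ to $\tCrst^{(-3)}$ and verifies it directly, which is considerably lighter than your plan of producing two decompositions $y^2=G^2+\lambda H^3$ of $\tCrst^{(-3)}$ and renormalising. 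Two small cautions about your derivation route. First, you speak of ``transporting a second generator of $\Sigma\subset\Jrst[3]$ through the isogeny''; generators of $\Sigma$ are killed by $\phi$, so what you actually need are images of $3$-torsion points outside $\Sigma$ (equivalently, the other quadratics $\tH_i$, which the paper later records in Theorem~\ref{thm:Amtx}). Second, the triple $(r',s',t')$ you read off depends on the choice and ordering of the two generators, so a priori it agrees with $\psi_0$ only up to the $\PGL_2(\FF_3)$-action on $(r,s,t)$; this is not a defect for the birational-equivalence claim, but it is exactly why the paper remarks after the lemma that the \emph{natural} involution is a modification $\psi_0'$ of $\psi_0$, the latter being chosen only because its $\theta_0$ is simple. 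Your conceptual justifications of the involution property and of the final assertion match the paper's.
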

\begin{proof}
One can check directly that $\sC_{r's't'}$ is birationally
equivalent to $\tCrst^{(-3)}$ under the transformation
\[ 
\theta_0 : (x,y) \mapsto 
\Bigl( \frac{-(r^3 - 3rt + t^2 + t)(rs - st - 1)}{(r^2-t)(r-1)^2 s} x
+ \frac{r-t}{r-1},\
  \frac{\Delta t (rs - st - 1)^3 (r^3 - 3rt + t^2 + t)^2 }
       {s^2 (r-1)^3 (r^2-t)^3} y\Bigr).
\]
This naturally marks some $\krn^{(-3)}$ level structure on $\tJrst$. Note that it even does so over $\QQ$, where we have no primitive cube root of unity. The Weil pairing implies that on $\tJrst^{(-3)}$, any two $\krn$ level structures must differ by a unique automorphism of $\krn$. It follows that the same holds for $\krn^{(-3)}$ level structures on $\tJrst$ itself.
\end{proof}

\begin{rmk}
A little more is true than we prove in Lemma~\ref{L:psi0}: we have a \emph{natural} $\krn^\vee$ structure on $\tJrst$. In Lemma~\ref{L:sigma_vee_id}, we identify this and in Lemma~\ref{L:psi123} we identify the corresponding involution on $k(r,s,t)$, which is not \emph{quite} $\psi_0$ as listed above. We selected $\psi_0$ because the corresponding transformation $\theta_0$ is easy to write down.
\end{rmk}

\subsection{Additional relations}

At this point, we have what we require for the applications
of the next sections, since we only need the $\Sigma$ level structure up to $\Sigma$-automorphism.
We shall devote the remainder of the section to a more concise description of~$\tCrst$,
which will also give the full natural $\krn^{\vee}$ level structure on $\tJrst$.

\begin{rmk} In the case of isotropic $(\ZZ/2)^2$-level structure, there 
is a very satisfying expression for the isogenous abelian variety in the 
general case (i.e., when it is a Jacobian), described 
in~\cite{bostmestre:richelot} (see also \cite{prolegom}*{Chapter~9}). 
An isotropic $(\ZZ/2)^2$-level structure on the Jacobian of a genus~$2$ 
curve can be expressed by a model of the curve of the form
\[\C\colon y^2 = q_1(x) q_2(x) q_3(x),\]
where each $q_i$ is a quadratic polynomial in $x$. One forms a $3\times 3$ 
matrix whose columns are the coefficients of $q_1,q_2,q_3$. If the determinant $\Delta$ 
of this matrix is nonzero, then the isogenous surface is a Jacobian 
and the associated curve can be expressed as
\[\tC\colon y^2=\Delta \tq_1(x)\tq_2(x)\tq_3(x),\]
where the coefficients of the $\tq_i$ are easily expressible in terms 
of the cofactors of this same $3\times 3$ matrix. Of particular note 
is that the curve is (naturally) again of the same form, Indeed, it is 
straightforward to verify that the same operation applied twice gives us 
back a model that is isomorphic to the curve we started with and that 
the quadrics satisfy the peculiar relation
\[q_1(x) \tq_1(\tx) + q_2(x) \tq_2(\tx) +  q_3(x) \tq_3(\tx)=
\Delta (x - \tx)^2.\]
In fact, this relation is the basis for the $(2 , 2)$ correspondence 
between $\C$ and $\tC$ that gives rise to the polarized isogeny between 
their Jacobians.
\end{rmk}

One might hope to find a similar relation in our case. Indeed, the general 
theory 
implies there is a correspondence 
between $\Crst$ and $\tCrst$ giving rise to the polarized isogeny 
between $\Jrst$ and $\tJrst$. However, that general theory only predicts 
an $(18 , 2)$ correspondence which lacks the desired symmetry and 
does not seem inviting from a computational point of view. The following 
theorem gives a possibly more attractive relation between the models 
for $\Crst$ and $\tCrst$ expressing the level structures on their Jacobians.

\begin{thm}\label{thm:Amtx}
For $j=1,\ldots, 4$ let $H_j(x)=h_{2j}x^2+h_{1j}x+h_{0j}$ be 
as in Theorem~\ref{T:rst_family};
for $i=1,\ldots ,7$, let $\delta_i$, be as in~(\ref{eq:discFfactors}).  
Define the matrix~$A$ by 
\begin{equation}\label{eq:Amtx}
A = \begin{pmatrix}
              h_{21} & h_{22} & h_{23} & h_{24} \\
              h_{11} & h_{12} & h_{13} & h_{14} \\
              h_{01} & h_{02} & h_{03} & h_{04} \\
       -\frac{r}{2} & -\frac{1}{2} &
       -\frac{\delta_3}{2} & -\frac{\delta_5}{2}
     \end{pmatrix}.
\end{equation}
Then $\det(A)=\Delta$ as in \eqref{eq:Delta}. Let $M$ be the cofactor 
matrix of $A$, i.e., $M^T= \det(A) A^{-1}$ and let $M_{ij}$ be its 
entry in the $i$-th row and $j$-th column.
Define
\begin{equation}\label{eq:Atildemtx}
\tA = \left(\!\!\!\!\begin{array}{rrrr}
   M_{31} & M_{32} & M_{33} & M_{34} \\
   -2 M_{21} & -2 M_{22} & -2 M_{23} & - 2 M_{24} \\
   M_{11} & M_{12} & M_{13} & M_{14} \\
  \frac{1}{2}M_{41} &\frac{1}{2}M_{42} &\frac{1}{2}M_{43} &\frac{1}{2}M_{44} 
          \end{array}\!\!\right),
\end{equation}
and
\begin{equation}\label{eq:HtildefromAtilde}
\tH_j(x) = \tA_{1j} x^2 + \tA_{2j} x + \tA_{3j},\hbox{ for } j = 1,\ldots 4,
\end{equation}
so that the $\tH_j(x)$ bear the same relationship to the first
three rows of $\tA$ as the $H_j(x)$ bear to~$A$.
Also define
\begin{equation}\label{eq:lambdahats}
\tlambda_1 = \lambda_1 \Delta / \delta_6^2, \ \
\tlambda_2 = \lambda_2 \Delta / \delta_7^2, \ \
\tlambda_3 = \lambda_3 \Delta \delta_3^2 / \delta_4^2, \ \
\tlambda_4 = \lambda_4 \Delta \delta_5^2.
\end{equation}
Finally define
\begin{equation}\label{eq:G1hat}
\tG_4(x) = \Delta \bigl( G_1(x) - 2 t \bigr),
\end{equation}
and define $\tG_1(x),\tG_2(x),\tG_3(x)$ up to $\pm$
(which is all we require for these) to be such that
\begin{equation}\label{eq:Gtildes}
\tG_i(x)^2 = 
\tG_4(x)^2 + \tlambda_4 \tH_4(x)^3 - \tlambda_i \tH_i(x)^3,
\hbox{ for } i = 1,2,3.
\end{equation}
Then the curve $\tCrst$ of Theorem~\ref{T:Ctilde} is the same as
\begin{equation}\label{Ctildeagain}
\tCrst : y^2 = 
-3 \Bigl( \tG_i(x)^2 + \tlambda_i \tH_i(x)^3 \Bigr),
\hbox{ for } i = 1,\ldots , 4.
\end{equation}
\end{thm}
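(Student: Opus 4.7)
The theorem reduces to a collection of polynomial identities in $k(r,s,t)[x]$ among explicitly given quantities, so my plan is to treat it as a symbolic verification problem organized in three steps.

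\textbf{Step 1: $\det(A) = \Delta$.} The bottom row of $A$ has the simple entries $-r/2, -1/2, -\delta_3/2, -\delta_5/2$, so I would expand the determinant along it. After substituting the explicit quadratic coefficients $h_{ij}$ read off from Theorem~\ref{T:rst_family}, the resulting expression is a polynomial in $r,s,t$ which can be matched termwise against $\Delta$ in \eqref{eq:Delta}. Alternatively, since both sides are polynomials of bounded $(r,s,t)$-degree, one can verify the identity by evaluation at a sufficiently large grid of specializations and conclude by interpolation.

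\textbf{Step 2: Verify the $i=4$ case.} The formula $\tG_4 = \Delta(G_1 - 2t)$ in \eqref{eq:G1hat} is immediate on inspection against Theorem~\ref{T:Ctilde}, since the cubic $G_1$ from Theorem~\ref{T:rst_family} has constant term $sr^3 - st^2 + t$, so $G_1 - 2t$ produces $sr^3 - st^2 - t$, matching the $\tG_4$ stated in Theorem~\ref{T:Ctilde}. Similarly, $\lambda_4\delta_5^2 = 4st$ directly from the definitions, so $\tlambda_4 = \lambda_4\Delta\delta_5^2 = 4\Delta st$ matches. It remains to compute the cofactors $M_{14}, M_{24}, M_{34}$ of $A$ and verify that $\tH_4(x) = M_{34}x^2 - 2M_{24}x + M_{14}$ is the $\tH_4$ listed in Theorem~\ref{T:Ctilde}. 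This is again a polynomial identity in $r,s,t$ of bounded degree.

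\textbf{Step 3: Handle $i = 1, 2, 3$.} For each such $i$, compute $\tH_i(x) = M_{3i}x^2 - 2M_{2i}x + M_{1i}$ from the cofactor matrix, then verify that
\[
P_i(x) := \tG_4(x)^2 + \tlambda_4 \tH_4(x)^3 - \tlambda_i \tH_i(x)^3
\]
is a perfect square in $k(r,s,t)[x]$. Once $\tG_i$ is defined as a square root of $P_i$, the equation $-3y^2 = \tG_i^2 + \tlambda_i\tH_i^3$ reduces formally to $-3y^2 = \tG_4^2 + \tlambda_4 \tH_4^3$, which is $\tCrst$ by Step~2. Conceptually, the existence of these square roots reflects the fact, already established via Theorem~\ref{T:Ctilde} and the discussion after it, that $\tJrst$ carries a full $\krn^\vee$-level structure, and the four triples $(\tG_j, \tH_j, \tlambda_j)$ realize the four nontrivial classes of three-torsion divisors on $\tJrst$ via Lemma~\ref{L:general_form}.

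\textbf{Main obstacle.} The principal difficulty lies in Step~3: the polynomials $P_i$ and their square roots $\tG_i$ have large $(r,s,t)$-degree, and a direct symbolic factorisation may be computationally heavy. The most efficient strategy, parallel to the interpolation argument used in the proof of Theorem~\ref{T:Ctilde}, is to bound the $(r,s,t)$-degrees of the candidate $\tG_i$, specialise $(r,s,t)$ to a finite grid of rational values large enough to pin down such polynomials by interpolation, check the perfect-square property and the resulting identity at each specialisation, and then conclude by the interpolation bound that the identity holds generically. With this approach, both Step~1 and the substantive part of Step~3 fall within the reach of standard computer algebra.
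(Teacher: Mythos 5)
Your proposal is correct and is essentially the paper's approach: the paper states Theorem~\ref{thm:Amtx} without a written argument, leaving it as a direct symbolic verification (with the computations deposited in the electronic resources), in the same spirit as the interpolation-with-degree-bounds proof given for Theorem~\ref{T:Ctilde}. Your reduction — checking $\det(A)=\Delta$, matching the $i=4$ data against Theorem~\ref{T:Ctilde} (where $\tG_4$ and $\tlambda_4$ agree by inspection and only the cofactor expression for $\tH_4$ needs computation), and verifying that the right-hand side of \eqref{eq:Gtildes} is a polynomial square so that the $i=1,2,3$ equations coincide formally with the $i=4$ one — is exactly the content that must be verified.
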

An immediate consequence the relationship between the
matrices $A, \tA$ is the following identity, which
is strikingly similar to
the identity~(9.2.5) of~\cite{prolegom} for the Richelot isogeny.
\begin{cor}\label{cor:magic}
Let the $H_i(x), \tH_i(x)$ be as in Theorem~\ref{thm:Amtx}. Then
\begin{equation}\label{eq:magic}
H_1(x) \tH_1(\tx) + H_2(x) \tH_2(\tx) 
+ H_3(x) \tH_3(\tx) + H_4(x) \tH_4(\tx)
 = \Delta (x - \tx)^2.
\end{equation}
\end{cor}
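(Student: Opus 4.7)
The plan is to derive \eqref{eq:magic} directly from the adjugate relation between $A$ and its cofactor matrix $M$, bypassing any explicit expansion in $r, s, t$. Theorem~\ref{thm:Amtx} gives $\det(A) = \Delta$, so the standard identity $A M^T = \Delta \cdot I_4$ holds, i.e.
\[
\sum_{j=1}^{4} A_{kj} M_{ij} = \Delta \cdot \delta_{ki} \quad \text{for all } k, i \in \{1, 2, 3, 4\}.
\]
This is the only arithmetic input the proof requires.

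The main step is to repackage the left-hand side of \eqref{eq:magic} bilinearly so that the matrix identity collapses the sum. Set $\mathbf{u}(x) = (x^2, x, 1)$ and $\mathbf{w}(\tx) = (1, -2\tx, \tx^2)$. The definition~\eqref{eq:Amtx} of $A$ gives $H_j(x) = \sum_{k=1}^{3} A_{kj}\, u_k(x)$, and reading the first three rows of $\tA$ in~\eqref{eq:Atildemtx} into~\eqref{eq:HtildefromAtilde} yields
\[
\tH_j(\tx) = M_{3j}\,\tx^2 - 2 M_{2j}\,\tx + M_{1j} = \sum_{i=1}^{3} M_{ij}\, w_i(\tx).
\]
The reversal of row indices $1 \leftrightarrow 3$ between $A$ and $\tA$, together with the $-2$ and $\frac{1}{2}$ factors, are precisely engineered so that this clean identification holds. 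Multiplying and summing,
\[
\sum_{j=1}^{4} H_j(x)\,\tH_j(\tx) \;=\; \sum_{k,i=1}^{3} u_k(x)\, w_i(\tx) \sum_{j=1}^{4} A_{kj} M_{ij} \;=\; \Delta \sum_{k=1}^{3} u_k(x)\, w_k(\tx),
\]
and the final sum is $x^2 - 2 x \tx + \tx^2 = (x - \tx)^2$, which is exactly \eqref{eq:magic}.

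There is no substantive obstacle. The fourth row of $A$ and the fourth row of $M$ play no role in any $H_j$ or $\tH_j$, but they are exactly what make $A M^T = \Delta \cdot I_4$ hold as a full $4\times 4$ identity: the peculiar-looking entries $-\tfrac{1}{2}(r, 1, \delta_3, \delta_5)$ in row $4$ of $A$ and the factor $\tfrac{1}{2}$ multiplying the $M_{4j}$ in row $4$ of $\tA$ are simultaneously what yield $\det(A) = \Delta$ and what force the off-diagonal entries of $A M^T$ in the relevant $3\times 3$ block to vanish. So the corollary is really a structural statement recording that $\tA$ has been engineered from $A$ so that the bilinear form $\sum_j H_j(x)\tH_j(\tx)$ reads through the quadratic $(x-\tx)^2$ rather than some cruder expression.
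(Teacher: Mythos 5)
Your proof is correct and is essentially the same as the paper's: both rest on the identity $A\,\adj{A}=\Delta I_4$ together with the observation that the rows of $\tA$ are the rows of the cofactor matrix reversed and rescaled by $-2$ and $\tfrac12$, so that the coefficients of $x^2$, $x\tx$, $\tx^2$ (and all cross terms) in $\sum_j H_j(x)\tH_j(\tx)$ are read off from the entries of $A\,\adj{A}$. Your bilinear repackaging with $\mathbf{u}(x)$ and $\mathbf{w}(\tx)$ is just a tidier way of organizing the same coefficient check.
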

\begin{proof}
We first note that, if we take the matrix~$\tA$ of~(\ref{eq:Atildemtx}),
divide the second row by~$-2$, then swap the first and third rows,
and then take the transpose, we obtain $\adj{A}$,
the adjugate of the matrix~$A$ of~(\ref{eq:Amtx}).
We recall the standard identity $A \adj{A} = \Delta I_4$
from linear algebra and note that, in the left hand side
of~(\ref{eq:magic}), the coefficients of $x^2, \tx^2$
are equal to diagonal entries of $A \adj{A}$ and so equal~$\Delta$.
Similarly, the coefficient of $x\tx$
is $-2$ times a diagonal entries of $A \adj{A}$ and 
so equals~$-2\Delta$. The remaining coefficients on the
left hand side of~(\ref{eq:magic}) equal non-diagonal entries
of $A \adj{A}$, and so are all~$0$, as required.
\end{proof}

We can also formulate the relationship between the $H_i$ and the $\tH_i$ in
more intrinsic terms. Let $T_1,T_2,T_3,T_4\in \Jrst[3]$ corresponding to $H_i$ such that
$T_1+T_2=T_3$ and $T_1-T_2=T_4$ and let $\tT_i$ and $\tH_i$ be related analogously. Suppose we have a basis $T_1,T_2,U_1,U_2$ for $\Jrst[3]$ such that the Gram matrix of the Weil pairing (with values written additively) is
\[\begin{pmatrix}0&0&0&1\\0&0&-1&0\\0&1&0&0\\-1&0&0&0\end{pmatrix}.\]
Then $U_1,U_2$ naturally provides a basis choice for $\Jrst[3]/\krn=\krn^\vee$. We find that $\phi(U_1)=\pm \tT_1$ and that $\phi(U_2)=\pm \tT_2$, with the same choice of sign (which is an ambiguity in $U_1,U_2$ already). This provides the following.

\begin{lemma}\label{L:sigma_vee_id}
The labelling for the $\tT_i$ naturally marks the $\Sigma^\vee$ level structure on $\tJrst$ in the sense that for $i=1,\ldots,4$, we have
\[e_3(T_i,U)=0\text{ for any }U\in\tJrst[3] \text{ such that }\phi(U)=\tT_i\]
\end{lemma}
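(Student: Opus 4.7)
The plan is to deduce the lemma from the identification $\phi(U_i) = \pm \tT_i$ recorded just before the statement, combined with a short piece of bilinear algebra in $\Jrst[3]$.

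First I would observe that $e_3(T_i, U)$ depends only on the class $U + \krn$, and therefore only on $\phi(U) = \tT_i$. Indeed, $T_i \in \krn$ and $\krn$ is isotropic, so $e_3(T_i, V) = 0$ for every $V \in \krn = \ker(\phi)$, and bilinearity gives $e_3(T_i, U + V) = e_3(T_i, U)$. It therefore suffices, for each $i$, to exhibit a single preimage $U$ of $\tT_i$ under $\phi$ and check $e_3(T_i, U) = 0$.

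Next, I would fix a basis $T_1, T_2, U_1, U_2$ of $\Jrst[3]$ whose Gram matrix with respect to $e_3$ is the symplectic matrix displayed immediately before the lemma. Such a basis exists because $e_3$ is a non-degenerate alternating pairing on $\Jrst[3] \cong (\ZZ/3)^4$ and $\krn = \langle T_1, T_2 \rangle$ is maximal isotropic, so the standard symplectic extension argument produces the required $U_1, U_2$. The diagonal zeros of that Gram matrix yield $e_3(T_1, U_1) = e_3(T_2, U_2) = 0$; combined with the identification $\phi(U_i) = \pm \tT_i$ (the common sign is absorbed into the choice of $U_i$), this handles the cases $i = 1, 2$.

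For $i = 3, 4$, I would use bilinearity. Preimages of $\tT_3 = \tT_1 + \tT_2$ and $\tT_4 = \tT_1 - \tT_2$ can be taken to be $U_1 + U_2$ and $U_1 - U_2$ respectively, so
\[
e_3(T_3, U_1 + U_2) = e_3(T_1, U_1) + e_3(T_1, U_2) + e_3(T_2, U_1) + e_3(T_2, U_2) = 0 + 1 + (-1) + 0 = 0,
\]
and an analogous cancellation gives $e_3(T_4, U_1 - U_2) = 0$.

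The real content being invoked is the assertion $\phi(U_i) = \pm \tT_i$ from the paragraph preceding the lemma, and this is where I expect the main obstacle in a fully self-contained account. To justify it rigorously I would use the explicit description of $\phi$ by cubic forms on the Kummer surfaces from the proof of Theorem \ref{T:Ctilde}, together with the Mumford description $\tT_i = [\{\tH_i(x) = 0, y - \tG_i(x) = 0\} - \kappa]$ of the $\tT_i$. The matrix identity $A \, \adj{A} = \Delta I_4$ underlying Theorem \ref{thm:Amtx} makes the compatibility between the $\krn$ and $\krn^\vee$ level structures transparent and pins down the matching sign conventions on $U_1, U_2$.
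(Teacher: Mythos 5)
Your bilinear-algebra reduction is correct as far as it goes: since $\krn=\ker\phi$ is isotropic, $e_3(T_i,U)$ depends only on $\phi(U)$, and with the stated Gram matrix the cases $i=3,4$ do follow from $i=1,2$ together with the ``same sign'' assertion (note the same-sign condition is genuinely needed there, since $\langle T_1+T_2\rangle^\perp$ contains $U_1+U_2$ but not $U_1-U_2$). The genuine gap is that the statement you invoke to settle $i=1,2$ --- that $\phi(U_1)=\pm\tT_1$ and $\phi(U_2)=\pm\tT_2$ with a common sign --- is not an established fact you may cite: the sentence ``We find that $\phi(U_1)=\pm\tT_1$\dots'' in the paragraph preceding the lemma is precisely the claim the lemma formalizes. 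Indeed $e_3(T_1,\cdot)$ vanishes exactly on $\langle T_1,T_2,U_1\rangle$, whose image under $\phi$ is $\langle\phi(U_1)\rangle$, so the $i=1$ case of the lemma is \emph{equivalent} to $\tT_1=\pm\phi(U_1)$; your argument is circular at its core. You acknowledge this, but the route you sketch to close it is not a proof: the identity $A\,\adj{A}=\Delta I_4$ produces the polynomials $\tH_i$ and the correspondence of Corollary~\ref{cor:magic}, but by itself says nothing about which coset of $\krn$ the isogeny $\phi$ sends to $\pm\tT_i$.

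The paper closes the gap by direct computation: it observes that the statement need only be checked for a single specialization of $(r,s,t)$ --- for instance over a finite field where $\Jrst[3]$ is pointwise rational --- because the matching between cosets of $\krn$ and the points $\pm\tT_i$ is discrete data over the irreducible parameter space. One then uses the explicit cubic forms for $\phi$ on the Kummer surfaces to compute the images of the preimages $U$ and verifies all four pairings by exhaustion. A complete version of your argument would need both this reduction to one specialization (or some other rigidity argument) and the actual finite verification for $i=1,2$; with those in hand, your observation that $i=3,4$ then follow formally is a legitimate small economy over checking all four cases.
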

\begin{proof}
We only have to check this statement for a particular specialization of $r,s,t$, for instance over a finite field where the full $3$-torsion is pointwise defined. We have a completely explicit description of the map $\phi$ on the Kummer surfaces, which is sufficient to determine the appropriate elements $U$. One can then just verify the claim in the lemma by exhaustion. It is straightforward to check that the condition given indeed uniquely determines the structure (up to sign).
\end{proof}

\subsection{Automorphisms}

Recall from Section~\ref{sec:level} that $\sA_2(\krn)$ has $\PGL_2(\FF_3)$ acting on it. Furthermore, because in our case we have $\krn^\vee\simeq \krn^{(-3)}$, we get an additional automorphism $\J\mapsto \tJ^{(-3)}$. We identified the effect of this last automorphism on $r,s,t$ in Lemma~\ref{L:psi0}. Here we describe generators for the other automorphisms as well. Note that $\psi_0$ is only a rational map, because the abelian surface $\tJ$ need not be a Jacobian if $\J$ is: $\tJ$ may be a product of elliptic curves. In addition, while we have seen in Section~\ref{sec:param} that every Jacobian with a $\krn$ level structure admits a model that is a specialization of $\Jrst$, this may involve a change of basis. Thus, we should also expect $\PGL_2(\FF_3)$ to only act birationally on $(r,s,t)$.

\begin{lemma}\label{L:psi123}
The following transformations
\[ \begin{aligned}
\psi_1(r,s,t) &= \Bigl( \frac{t}{r^2},\ \frac{r^3 s}{t},\ \frac{t^2}{r^3} 
\Bigr),\\
\psi_2(r,s,t) &= \Bigl( r,\ \frac{1}{s(r^3 - 3rt + t^2 + t)},\ t \Bigr),\\
\psi_3(r,s,t) &= \Bigl( r,\ \frac{t(st+1)}{r^3},\ \frac{r^3 s}{st + 1} \Bigr),
\end{aligned}
\]
have the property that, for each $i = 1,2,3$, 
if $(r',s',t') = \psi_i(r,s,t)$ then $\sC_{r's't'}$ 
is birationally equivalent to $\sC_{rst}$. The group generated
by $\psi_1,\psi_2,\psi_3$ is isomorphic to $\PGL_2(\FF_3)$.
Furthermore, if $\psi_0$ is as in Lemma~\ref{L:psi0}
then the group generated by $\psi_0,\psi_1,\psi_2,\psi_3$ 
is isomorphic to $\Z/2 \times \PGL_2(\FF_3)$.
\end{lemma}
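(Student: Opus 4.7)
The plan is to prove the lemma in three stages: establish the birational equivalences for $\psi_1,\psi_2,\psi_3$, identify $\langle\psi_1,\psi_2,\psi_3\rangle$ with $\PGL_2(\FF_3)$, and then extend to $\Z/2\times\PGL_2(\FF_3)$ by incorporating $\psi_0$.

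For each $i\in\{1,2,3\}$, following the pattern of $\theta_0$ in Lemma~\ref{L:psi0}, I would exhibit an explicit birational map $\theta_i\colon \Crst\dashrightarrow \sC_{\psi_i(r,s,t)}$ of the form $(x,y)\mapsto(\alpha_i(x),\mu_i(x)\,y)$, with $\alpha_i$ a fractional linear transformation and $\mu_i(x)$ a rational function in $x$. Conceptually each $\theta_i$ arises from choosing an alternative ordered pair of generators for $\krn$ among $\{\pm T_1,\pm T_2,\pm T_3,\pm T_4\}$ (with $T_3=T_1+T_2$, $T_4=T_1-T_2$) and re-running the normalization procedure of Section~\ref{sec:param}; this generally requires a fractional linear change of variable in $x$ to bring the new quadratics $H_1',H_2'$ back to the canonical form $H_1'=x^2+r'x+t'$, $H_2'=x^2+x+r'$. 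Once the $\theta_i$ are written down, verification reduces to the identity $F_{\psi_i(r,s,t)}(\alpha_i(x))=\mu_i(x)^2 F_{rst}(x)$ in $k(r,s,t)(x)$, which clears denominators to a polynomial identity checkable by direct substitution.

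To identify $\langle\psi_1,\psi_2,\psi_3\rangle$ with $\PGL_2(\FF_3)$, I first verify by direct composition that each $\psi_i$ has order $2$. The group $\PGL_2(\FF_3)\cong S_4$ acts faithfully on the four cyclic subgroups of $\krn$, which by Theorem~\ref{T:rst_family} correspond to the four decompositions $F=G_i^2+\lambda_i H_i^3$ and hence to the four labels $\{1,2,3,4\}$. Each $\psi_i$ induces an involution in $S_4$; I compute these three involutions from the $\theta_i$'s and check that they generate $S_4$, giving $|\langle\psi_1,\psi_2,\psi_3\rangle|\geq 24$. The reverse inequality holds because any birational automorphism of $\sA_2(\krn)$ fixing the image in $\sA_2(1)$ lies in $\PGL_2(\FF_3)$ by Section~\ref{sec:level}; hence $\langle\psi_1,\psi_2,\psi_3\rangle\cong\PGL_2(\FF_3)$.

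For the direct product structure, $\psi_0$ has order $2$ by Lemma~\ref{L:psi0}. It then suffices to verify that $\psi_0\psi_i=\psi_i\psi_0$ for $i=1,2,3$ (a direct rational-function computation) and that $\psi_0\notin\langle\psi_1,\psi_2,\psi_3\rangle$ (by specialization to a concrete $(r,s,t)$). The commutativity reflects naturality of the duality-plus-twist $\J\mapsto\tJ^{(-3)}$ with respect to change of basis in $\krn$: composing $\krn\leftrightarrow\krn^{(-3)}$ with any $\PGL_2(\FF_3)$-automorphism in either order yields the same outcome on level structures. The main obstacle is producing the three transformations $\theta_i$ explicitly; once they are in hand, all remaining verifications are mechanical polynomial or rational-function identities.
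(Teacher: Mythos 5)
Your first two stages match the paper's proof: the paper likewise exhibits explicit maps $\theta_1,\theta_2,\theta_3$ of the form $(x,y)\mapsto(\alpha_i(x),\mu_i(x)y)$ and identifies $\psi_1,\psi_2,\psi_3$ with the transpositions $(12),(34),(13)$ of the four decompositions $F=G_i^2+\lambda_iH_i^3$, which generate $S_4\cong\PGL_2(\FF_3)$.

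The genuine gap is in your last stage: the verification ``$\psi_0\psi_i=\psi_i\psi_0$ for $i=1,2,3$'' will \emph{fail}, because $\psi_0$ does not commute with the $\psi_i$. The conceptual justification you give (naturality of $\J\mapsto\tJ^{(-3)}$ under change of basis of $\krn$) applies to the \emph{natural} duality involution, but $\psi_0$ as defined in Lemma~\ref{L:psi0} is not that involution: the remark following that lemma warns precisely that the natural $\krn^\vee$ involution ``is not \emph{quite} $\psi_0$.'' Concretely, $\psi_0$ corresponds to $(H_1,H_2,H_3,H_4)\leftrightarrow(\tH_4,\tH_3,\tH_2,\tH_1)$, i.e.\ it is the central duality element composed with the permutation $(14)(23)$, and $(14)(23)$ is not central in $S_4$ (e.g.\ it fails to commute with $(12)$, which is $\psi_1$). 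So $\psi_0\psi_1\neq\psi_1\psi_0$, and a direct rational-function computation would expose this rather than confirm it. The repair is the one the paper makes: replace $\psi_0$ by $\psi_0'=\psi_3\psi_1\psi_2\psi_3\,\psi_0$, which corresponds to $(H_1,H_2,H_3,H_4)\leftrightarrow(\tH_1,\tH_2,\tH_3,\tH_4)$, check that \emph{this} is an involution commuting with $\psi_1,\psi_2,\psi_3$, and note that $\langle\psi_0,\psi_1,\psi_2,\psi_3\rangle=\langle\psi_0',\psi_1,\psi_2,\psi_3\rangle\cong\Z/2\times\PGL_2(\FF_3)$. (Your argument for the upper bound $\#\langle\psi_1,\psi_2,\psi_3\rangle\le 24$ via automorphisms of $\sA_2(\krn)$ over $\sA_2(1)$ is also stated more strongly than Section~\ref{sec:level} justifies, since that cover has degree $960$ and is not a priori Galois with group $\PGL_2(\FF_3)$; the cleaner route is that the normalized model determines $(r,s,t)$, so the action on the labels is faithful.)
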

\begin{proof}
We first note that, for each $i=1,2,3$, 
if $(r',s',t') = \psi_i(r,s,t)$ then $\sC_{r's't'}$
is birationally equivalent to $\sC_{rst}$ under $\theta_i$,
where 
\[ \begin{aligned}
\theta_1(x,y) &= \Bigl( \frac{t}{rx},\ \frac{t y}{x^3} \Bigr),\\
\theta_2(x,y) &= 
\Bigl( \frac{(r-t) x + (r^2-t)}{(r-1) x + (t-r)} ,\ 
\frac{s(r^3 - 3rt + t^2 + t)^2 y}{(rx - x + t - r)^3} \Bigr),\\
\theta_3(x,y) &=
\Bigl( \frac{-r x}{x+r} ,\ \frac{r^3 y}{(x+r)^3} \Bigr).
\end{aligned}
\]
It can also be checked that $\psi_1,\psi_2,\psi_3$ permute
the roles of the $H_i$ and correspond, respectively, to:
\[ \begin{aligned}
( H_1, H_2, H_3, H_4) &\leftrightarrow ( H_2, H_1, H_3, H_4),\\
( H_1, H_2, H_3, H_4) &\leftrightarrow ( H_1, H_2, H_4, H_3),\\
( H_1, H_2, H_3, H_4) &\leftrightarrow ( H_3, H_2, H_1, H_4).
\end{aligned}
\]
It follows that they can be identified with the transpositions
$(12), (34), (13)$ in $S_4$, which generate all of~$S_4$.
Hence the group generated by $\psi_1,\psi_2,\psi_3$ is isomorphic
to~$S_4$ which, in turn, is isomorphic to $\PGL_2(\FF_3)$.
Note also that $\psi_1,\psi_2,\psi_3$ give the same permutation
of the roles of the $\tH_i$. 
We finally note that $\psi_0$ corresponds to
$( H_1, H_2, H_3, H_4) \leftrightarrow (\tH_4, \tH_3, \tH_2, \tH_1 )$  
and can be replaced by $\psi_0' = \psi_3  \psi_1  \psi_2 \psi_3 \psi_0$
which corresponds to
$( H_1, H_2, H_3, H_4) \leftrightarrow (\tH_1, \tH_2, \tH_3, \tH_4 )$,
and is given explicitly by
\[ 
\psi'_0(r,s,t) = 
\Bigl( \frac{-(r^2-t)(rs-st-1)(\delta_5-r)}{(r-1)^2\delta_7},\
       \frac{(r-1)^3 s \delta_6 \delta_7^2}{(rs-st-1)^3 \delta_5 \Delta},\
       \frac{t(rs-st-1)^3(\delta_5-r)^3}{(r-1)^3\delta_6\delta_7^2} \Bigr).
\]
This is an involution and it commutes with $\psi_1,\psi_2,\psi_3$.
Hence the group generated by the maps $\psi_0,\psi_1,\psi_2,\psi_3$,
which is the same as the group generated by $\psi'_0,\psi_1,\psi_2,\psi_3$,
must be isomorphic to the group $\Z/2 \times \PGL_2(\FF_3)$.
\end{proof}

\section{Isogeny descent}
\label{sec:descent}

Galois cohomology associates to an isogeny
\[0\to \J[\phi]\to \J \stackrel{\phi}{\to} \tJ\to 0\]
between abelian varieties over a field $k$ an exact sequence
\[0\to\frac{\tJ(k)}{\phi \J(k)}\stackrel{\gamma}{\to} 
H^1(k,\J[\phi])\to H^1(k,\J).\]
For $k$ a number field and $v$ a place of $k$, we consider the 
completion $k_v$ and its separable closure $k_v^\sep$ and identify 
$\Gal(k_v^\sep/k_v)$ with a relevant decomposition group inside 
$\Gal(k^\sep/k)$. This allows us to consider restriction maps 
$\res_v\colon H^i(k,.)\to H^i(k_v,.)$.
Writing $\gamma_v$ for the relevant connecting homomorphism over 
the base field $k_v$, this allows us to define the \emph{Selmer group}
\begin{equation}\label{E:selmerdef}
\Sel^{\phi}(\J/k)=\{\delta \in H^1(k,\J[\phi]): 
\res_v(\delta)\in \mathrm{im} \gamma_v\text{ for all places $v$ of $k$}\}.
\end{equation}
The Selmer group contains the image of $\gamma$. If this containment 
is strict then part of the Selmer group represents non-trivial elements 
in $\Sha(\J/k)$. To be precise, we have
\[0\to \frac{\tJ(k)}{\phi \J(k)}\to 
\Sel^{\phi}(\J/k)\to \Sha(\J/k)[\phi]\to 0.\]
Therefore, the computation of Selmer groups can be used to exhibit 
non-trivial elements in Tate-Shafarevich groups. This is taking a 
historically backward view, since originally Tate-Shafarevich groups 
were introduced as a means to measure the failure of Selmer groups 
to provide sharp bounds on the size of ${\tJ(k)}/{\phi \J(k)}$.

Let $\GG_m$ be the multiplicative group scheme over a field $k$ of 
characteristic not dividing $2$. For $d\in k^\times$, we write
$\GG_m^{(d)}$ for the \emph{quadratic twist by $d$} of the multiplicative 
group. It is the group scheme that fits in the short exact sequence
\[1\to \GG_m^{(d)}(L) \to L[\sqrt{d}]^\times 
\xrightarrow{\operatorname{Norm}} L^\times\to 1\]
for any extension $L$ of $k$. Similarly, for a positive integer $n$, we write 
$\mu_n^{(d)}\subset \GG_m^{(d)}$ for the kernel of the morphism $x\to x^n$.

We begin by stating the following slight generalization of a classical 
result from Kummer theory.
\begin{lemma} Let $n>0$ be odd, let $k$ be a field of characteristic 
not dividing $2n$ and let $\mu_n$ be the $\Gal(k^\sep/k)$-module 
of $n$-th roots of unity in $k^\sep$. For $d\in k^\times$, we have
\[H^1(k,\mu_n^{(d)})= \frac{\GG_m^{(d)}(k)}{\GG_m^{(d)}(k)^n}.\]
\end{lemma}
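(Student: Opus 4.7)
The plan is to mimic the classical Kummer theory derivation: assemble a twisted Kummer short exact sequence, take its Galois cohomology, and then kill the remaining obstruction via a twisted Hilbert~90 argument in which the oddness of $n$ does the key work.

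First I would verify exactness of
\[ 1 \to \mu_n^{(d)} \to \GG_m^{(d)} \xrightarrow{[n]} \GG_m^{(d)} \to 1 \]
as a sequence of sheaves on the small \'etale site of $\mathrm{Spec}\, k$. The kernel is $\mu_n^{(d)}$ by the definition given just before the lemma. Surjectivity on $k^\sep$-points is automatic: over $k^\sep$ the twist $\GG_m^{(d)}$ becomes isomorphic to $\GG_m$, and the $n$-th power map on $(k^\sep)^\times$ is surjective since $n$ is coprime to $\mathrm{char}\, k$. Taking Galois cohomology then produces the exact sequence
\[ 0 \to \frac{\GG_m^{(d)}(k)}{\GG_m^{(d)}(k)^n} \to H^1(k, \mu_n^{(d)}) \to H^1(k, \GG_m^{(d)})[n] \to 0, \]
reducing the lemma to showing that $H^1(k, \GG_m^{(d)})[n] = 0$.

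To handle the latter I would use the defining presentation of $\GG_m^{(d)}$ as the kernel of a norm, packaged as the short exact sequence
\[ 1 \to \GG_m^{(d)} \to \Res_{L/k}\GG_m \xrightarrow{\Nm} \GG_m \to 1, \]
where $L = k[\sqrt{d}]$ is viewed as an \'etale quadratic $k$-algebra (with $L \cong k \times k$ when $d \in (k^\times)^2$). The associated long exact sequence contains
\[ L^\times \xrightarrow{\Nm} k^\times \to H^1(k, \GG_m^{(d)}) \to H^1(k, \Res_{L/k}\GG_m), \]
and Shapiro's lemma together with Hilbert~90 make the last term vanish, giving $H^1(k, \GG_m^{(d)}) \cong k^\times / \Nm_{L/k}(L^\times)$. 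This quotient is annihilated by~$2$, because $a^2 = \Nm_{L/k}(a)$ for every $a \in k^\times$ (viewed in $L$); since $n$ is odd, multiplication by $n$ is an automorphism of this $2$-torsion group, so its $n$-torsion is trivial.

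I do not foresee any serious obstacle: the argument is essentially classical Kummer theory transported along the quadratic twist, with the only new ingredient being that the $H^1$ of the one-dimensional twist $\GG_m^{(d)}$ is $2$-torsion, which is killed by $[n]$ precisely because $n$ is assumed odd.
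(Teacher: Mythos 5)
Your argument is correct. The paper states this lemma without proof, presenting it only as ``a slight generalization of a classical result from Kummer theory,'' so there is no proof in the text to compare against; what you have written is precisely the standard argument the authors are implicitly invoking. Your two steps are both sound: the twisted Kummer sequence $1\to\mu_n^{(d)}\to\GG_m^{(d)}\xrightarrow{\,n\,}\GG_m^{(d)}\to 1$ is exact as Galois modules because $\GG_m^{(d)}$ splits over $k^\sep$ and $n$ is invertible in $k$, and the identification $H^1(k,\GG_m^{(d)})\cong k^\times/\Nm_{L/k}(L^\times)$ via Shapiro's lemma and Hilbert~90 (valid also in the split case $L\cong k\times k$, where $\GG_m^{(d)}\cong\GG_m$) shows this group is killed by $2$, hence has trivial $n$-torsion for odd $n$. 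One small point worth noting, which you handle correctly: the paper's defining sequence $1\to\GG_m^{(d)}(L')\to L'[\sqrt d]^\times\to L'^\times\to 1$ is exact only at the level of sheaves, not on points for every $L'$, and your proof uses it only in that sheaf-theoretic form.
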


\begin{lemma}\label{L:casselsmap}
Let $\tphi\colon \tJ\to\J$ be a polarized isogeny between principally 
polarized abelian surfaces with kernel 
$\krn^{(-3d)}=\mu_3^{(d)}\times\mu_3^{(d)}$. Suppose that $\J$ is 
the Jacobian of a genus $2$ curve of the form
 \[\begin{aligned}\C\colon y^2&=-3d\big(G_1(x)^2+ \lambda_1 H_1(x)^3\big)\\
                               &=-3d\big(G_2(x)^2+ \lambda_2 H_2(x)^3\big),
\end{aligned}\]
where the $3$-torsion subgroup with generators supported at $H_1(x)=0$ 
and at $H_2(x)=0$ is the kernel of an isogeny $\phi\colon \J\to \tJ$ 
such that $\phi\circ\tphi=3$.
Then the connecting homomorphism
\[\tgamma\colon \frac{\tJ(k)}{\phi J(k)}\to H^1(k,\krn^{(-3d)})=
\frac{\GG_m^{(d)}(k)}{\big(\GG_m^{(d)}(k)\big)^3}
\times\frac{\GG_m^{(d)}(k)}{\big(\GG_m^{(d)}(k)\big)^3}\]
is induced by the partial map
\[\begin{matrix}
\C&\dashrightarrow&\GG_m^{(d)}&\times&\GG_m^{(d)}\\[5pt]
(x,y)&\longmapsto& \Big( y-\sqrt{d}G_1(x)&,&y-\sqrt{d}G_2(x) \Big)
\end{matrix}\]
\end{lemma}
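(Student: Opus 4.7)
The plan is to compute the connecting homomorphism $\tgamma$ by the explicit Cassels/Schaefer recipe for isogeny descent on Jacobians, adapted to the quadratic twist by $-3d$.

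First I would identify the Galois module structure of $\ker\phi$. The torsion classes $T_i = [D_i^+ - \kappa]$, with $D_i^+ = \{H_i(x)=0,\ y = \sqrt{-3d}\,G_i(x)\}$, are defined over $k(\sqrt{-3d})$ and interchanged with $-T_i$ by the non-trivial element of $\Gal(k(\sqrt{-3d})/k)$. Hence $\ker\phi \simeq \krn^{(-3d)} = \mu_3^{(d)} \times \mu_3^{(d)}$ as Galois modules, using the identification $\mu_3 = (\ZZ/3)^{(-3)}$. Applying the preceding Kummer lemma componentwise then yields $H^1(k,\ker\phi) = (\GG_m^{(d)}(k)/\GG_m^{(d)}(k)^3)^2$, matching the codomain of the stated map.

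Second, I would set up the connecting cocycle. For $\tilde Q \in \tJ(k)$, any lift $Q \in \J(\bar k)$ with $\phi(Q)=\tilde Q$ gives the standard cocycle $\sigma \mapsto Q^\sigma - Q$ in $\ker\phi$, which represents $\tgamma(\tilde Q)$. The key algebraic fact is the factorisation
\[
(y - \sqrt{-3d}\,G_i)(y + \sqrt{-3d}\,G_i) = -3d\lambda_i H_i(x)^3,
\]
which shows that $f_i := y - \sqrt{-3d}\,G_i(x)$, viewed as a function on $\C$ over $k(\sqrt{-3d})$, has divisor $3 D_i^+ - 3\kappa_\infty$. Thus $f_i$ is a cube-root function for the $3$-torsion class $T_i$, and following the recipe of \cite{schaefer:sel} and \cite{BPS:quartic}, evaluating $f_i$ at an appropriate divisor representative of $Q$ and forming the ratio $\sigma(f_i(Q))/f_i(Q) \in \mu_3^{(d)}$ recovers the $i$th coordinate of the cocycle $\sigma \mapsto Q^\sigma - Q$.

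Finally, I would translate this to the stated $\sqrt d$-valued formula. For a point $(x,y) \in \C(k)$ mapping to $\tilde Q \in \tJ(k)$ via Abel-Jacobi, the Kummer image in $H^1(k,\mu_3^{(d)}) \simeq \GG_m^{(d)}(k)/\GG_m^{(d)}(k)^3$ is the class of $f_i(Q)^3$, which unwinds to the class of $y - \sqrt{-3d}\,G_i(x)$ modulo cubes. Using the Hilbert 90 presentation $\GG_m^{(d)}(k) = k(\sqrt d)^\times / k^\times$, so that the target identifies with $k(\sqrt d)^\times / (k^\times \cdot k(\sqrt d)^{\times 3})$, I would verify that this class coincides with that of $y - \sqrt d\,G_i(x)$, yielding the stated formula.

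The principal obstacle is this last step: matching the $\sqrt{-3d}$-flavoured cube-root datum against the $\sqrt d$-valued expression in the target modulo $k^\times$ and cubes. It requires careful handling of the interplay between the two characters $\chi_d$ and $\chi_{-3d}$, whose ratio is the Galois character $\chi_{-3}$ of $\mu_3$, and exhibiting the cube identity that absorbs the $\sqrt{-3}$ discrepancy into the allowed quotient.
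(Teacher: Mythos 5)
Your overall strategy --- fleshing out the Schaefer/BPS recipe via the cube-root functions attached to the marked $3$-torsion --- is exactly what the paper intends, since its own proof is a one-line citation of \cite{schaefer:sel} and \cite{BPS:quartic}; and your first two steps are sound: the classes supported at $H_i(x)=0$ do form $\krn^{(-3d)}$, and $f_i=y-\sqrt{-3d}\,G_i$ does have divisor $3D_i-3\kappa_\infty$. The genuine gap is in \emph{which} connecting homomorphism these functions compute, and it derails your last two steps. By Schaefer's theorem, evaluating $f_1,f_2$ at $k$-rational divisors on $\C$ computes the descent map for the \emph{dual} isogeny, namely $\J(k)/\tphi\tJ(k)\to H^1(k,\ker\tphi)$ with $\ker\tphi\cong\Hom(\ker\phi,\mu_3)$; a partial map on $\C$ can only induce a map on $\Pic^0(\C)(k)=\J(k)$, never on $\tJ(k)/\phi\J(k)$. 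Your cocycle setup does not mesh with this: the lift $Q\in\J(\kbar)$ of $\tilde Q\in\tJ(k)$ is not $k$-rational, so $f_i(Q)$ is not a $k(\sqrt{-3d})$-valued quantity, $\sigma(f_i(Q))=f_i^{\sigma}(Q^{\sigma})\neq f_i(Q^{\sigma})$ since $f_i$ is not defined over $k$, and the lift-based recipe for $\tJ(k)/\phi\J(k)$ would require functions pulled back from $\tJ$, i.e.\ functions on the \emph{other} curve. Your phrase ``a point $(x,y)\in\C(k)$ mapping to $\tilde Q\in\tJ(k)$ via Abel--Jacobi'' is the same confusion: Abel--Jacobi on $\C$ lands in $\J(k)$.

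Second, the step you yourself flag as the principal obstacle --- converting the $\sqrt{-3d}$-valued datum into the stated $\sqrt d$-valued formula by some cube identity --- cannot be carried out: $y-\sqrt{-3d}\,G_i$ and $y-\sqrt{d}\,G_i$ lie in different quadratic extensions and represent classes in different groups, $\GG_m^{(-3d)}(k)$ modulo cubes versus $\GG_m^{(d)}(k)$ modulo cubes, which are not isomorphic in general when $-3$ is not a square (their local orders already differ at places $v\equiv 2\pmod 3$). What is true, and what the computations of Section~\ref{sec:descent} actually use, is the following matching of twists: for a curve written $y^2=d\bigl(G_i(x)^2+\lambda_iH_i(x)^3\bigr)$ the map $(x,y)\mapsto\bigl(y-\sqrt d\,G_1(x),\,y-\sqrt d\,G_2(x)\bigr)$, with values in $\GG_m^{(d)}(k)$ modulo cubes, induces the connecting map on $\J(k)/\tphi\tJ(k)$ into $H^1(k,\krn^{(-3d)})$ (so for $\Crst$ one takes $d=1$ and gets $y-G_i$ valued in $k^\times/k^{\times3}$, while for $\tCrst$ one takes $d=-3$ and gets $y-\sqrt{-3}\,\tG_i$ valued in $k(\sqrt{-3})^\times$). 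The discrepancy you are trying to reconcile is thus a bookkeeping mismatch between the displayed curve form, the displayed formula, and the displayed domain (which should read $\J(k)/\tphi\tJ(k)$), not something an identity can absorb; a correct proof should establish the dual-isogeny statement directly from \cite{schaefer:sel} and \cite{BPS:quartic} with the twists aligned as above, rather than hunting for a cube relation between the two square roots.
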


\begin{proof}This is a direct application of the theory developed 
in \cite{schaefer:sel} and \cite{BPS:quartic}.
\end{proof}

From Theorems~\ref{T:rst_family} and \ref{T:Ctilde} we can obtain 
isogenies $\phi\colon \J\to\tJ$ and $\tphi\colon \tJ\to \J$ with 
kernels $\krn=\ZZ/3\times\ZZ/3$ and $\krn^\vee=\mu_3\times\mu_3$ 
respectively. We use Lemma~\ref{L:casselsmap} to compute 
$\Sel^{\phi}(\J/\QQ)$ and $\Sel^{\tphi}(\tJ/\QQ)$. As suggested by 
the lemma, we represent the cohomology classes by elements of 
$\Q(\sqrt{-3})^\times/\Q(\sqrt{-3})^{\times3}$ and 
$\Q(\sqrt{-3})^\times/\Q(\sqrt{-3})^{\times3}$ respectively.

We take $S$ to be the set of primes consisting of $3$ and the primes 
of bad reduction of $\C$. By \cite{BPS:quartic}*{Proposition~9.2}, the Selmer groups 
lie in the subgroups that are unramified outside $S$. We can represent 
those using $S$-units in $\Q(\sqrt{-3})$ and $\Q$. This already provides 
us with explicit finite groups that contain the Selmer groups. The 
remaining conditions come from the local images at $v\in S$ (note 
that $\RR^\times/\RR^{\times 3}$ is trivial, so the archimedian place 
does not provide any information). With the explicit description of 
the maps $\gamma_v$ and $\tgamma_v$ we can generate elements in their 
images. Using \cite{schaefer:sel}*{Lemma~3.8, Proposition~3.9} and some basic
basic diagram chasing we have
\begin{equation}\label{eq:localorder}
\# \frac{\tJ(\Q_v)}{\phi\J(\Q_v)}\,
\# \frac{\J(\Q_v)}{\tphi\tJ(\Q_v)}
=  \frac{\# \krn(\Q_v) \,\# \krn^\vee(\Q_v)}{ |3|_p^2}=
\begin{cases}
  81&\text{if }v=3\text{ or }v\equiv 1\pmod{3}\\
   9&\text{if }v\equiv 2\pmod{3}
\end{cases},
\end{equation}
so we know when we have found enough elements to generate the entire 
image. By explicitly computing the restriction maps 
$\QQ^\times/\QQ^{\times 3}\to \QQ_v^\times/\QQ_v^{\times 3}$ and similarly 
for $\QQ(\sqrt{-3})$, we can compute the Selmer groups using essentially 
the definition in~\eqref{E:selmerdef}.

We now apply the above theory to several examples,
which combine the information of standard $2$-descent
and the above descent via $\krn$-isogeny and $\krn^\vee$-isogeny.
In particular, this will give the first examples of
nontrivial 3-part of the Tate-Shafarevich group on a
non-reducible abelian surface.

The first example illustrates a situation where a $3$-isogeny descent 
can be used to obtain a sharper rank bound than one can get from a 
$2$-descent and hence exhibit some non-trivial $2$-torsion elements 
in the Tate-Shafarevich group. There are alternative methods to do 
this, which show in the process that there is no $4$-torsion, but 
obtaining unconditional results through these is too computationally 
expensive at present. The computations involved in these examples are easily
reproduced using Magma \cite{magma} and the software we have made available at
\cite{bft:electronic}.

\begin{example}\label{E:-3,-3,-3}
Let $\J$ be the Jacobian of the curve
\[\sC_{-3,-3,-3}\colon y^2=(12x^3-105)^2-12(x^2 - 3x - 3)^3.\]
Then $\J(\QQ)\simeq(\ZZ/3)^2\times\ZZ$ and $\Sha(J/\QQ)[6]\simeq(\ZZ/2)^3$.

Furthermore, conditional on the Generalized Riemann Hypothesis for a certain degree $12$ number field, we have $\Sha(J/\QQ)[6^\infty]=(\ZZ/2)^3$.
\end{example}

\begin{proof}
Using the isogenies $\phi\colon \J\to\tJ$ and $\tphi\colon \tJ\to \J$ we find
\[\Sel^{\tphi}(\tJ/\QQ)=(\ZZ/3)^4\text{ and }\Sel^\phi(\J/\QQ)=0.\]
We already know that $\J[3](\QQ)$ is non-trivial and further computation 
shows that $\J(\QQ)^{\operatorname{tors}}=(\ZZ/3)^2$. 
A height computation shows that divisors supported at $101x^2+21x+147=0$ 
and at $x^2-6x-45=0$ generate independent classes in $\J(\QQ)$. From this 
one can deduce the structure of $\J(\QQ)$ and that $\Sha(\J/\QQ)[\phi]$ 
and $\Sha(\tJ/\QQ)[\tphi]$ are trivial. From $3=\tilde{\phi}\circ\phi$ 
it follows that $\Sha(J/\QQ)[3]$ is trivial as well.

A $2$-descent on $\J$ yields $\Sel^2(\J/\QQ)=(\ZZ/2)^5$, which shows 
that $\Sha(\J/\QQ)[2]=(\ZZ/2)^3$. Indeed $\J$ is {\em{odd}} in the sense 
of \cite{poosto:casselstate}.

In order to prove there is no $4$-torsion in $\Sha(\J/\QQ)$ we observe 
that $\J^{(-2)}(\QQ)$ has rank at least $3$, which can be shown by 
exhibiting enough points and a height pairing computation. One can 
compute that $\Sel^2(\J/\QQ(\sqrt{-2}))=(\ZZ/2)^5$, provided one 
verifies a certain class group computation for which one presently requires 
the Generalized Riemann Hypothesis. Since the rank of $\J(\QQ(\sqrt{-2}))$ 
is the sum of the ranks of $\J(\QQ)$ and $\J^{(-2)}(\QQ)$, one concludes 
that $\Sha(\J/\QQ(\sqrt{-2}))[2]$ is trivial. Since the restriction 
map $\Sha(\J/\QQ)\to\Sha(\J/\QQ(\sqrt{-2}))$ can only kill elements of 
order $2$, the statement follows.

It is worth noting that $\Sel^2(\J/\QQ)=(\ZZ/2)^5$, so 
$\Sha(J^{(-2)}/\QQ)[2^\infty]=(\ZZ/2)^2$.

In fact, using a visibility argument \cite{bruinflynn:vis} we find 
that $\J^{(2)}=\ZZ^3$ and that $\Sel^2(\J/\QQ(\sqrt{2}))=(\ZZ/3)^2$. 
It follows that $\Sha(\J/\QQ(\sqrt{2}))[2]=0$. Since the restriction 
map $\Sha(\J/QQ)\to\Sha(\J/\QQ(\sqrt{2}))$ can only kill $2$-torsion, 
the statement in the example follows.
\end{proof}

\begin{example}\label{E:-2,1,2}
Let $\tJ$ be the Jacobian of the curve
\[\tilde{\sC}_{-2,1,2}\colon 
y^2=-48(83x^3 + 498x^2 - 996x + 581)^2-3984(15x^2 - 26x + 10)^3.\]
Then $\tJ(\QQ)\simeq\ZZ$ and $\Sha(\tJ/\QQ)[3]\simeq(\ZZ/3)^2$.
\end{example}

\begin{proof}
We find
\[\Sel^{\tphi}(\tJ/\QQ)=(\ZZ/3)^5 \text{ and } \Sel^\phi(\J/\QQ)=0.\]
With $\J(\QQ)[3]=(\ZZ/3)^2$ and $\tJ(\QQ)[3]=0$, this implies that 
$\tJ(\QQ)$ is of rank at most $3$.

From a $2$-descent we find $\Sel^2(\J/\QQ)=(\ZZ/2)$. Furthermore, we 
find a non-torsion point in $\J(\QQ)$, so we find that 
$J(\QQ)=(\ZZ/3)^2\times\ZZ$ and $\tJ(\QQ)=\ZZ$. Combined with the 
result above, this yields that $\Sha(\tJ/\QQ)[3]=(\ZZ/3)^2$.
\end{proof}

\begin{example}\label{E:2,-1,-2}
The Jacobian $\tJ$ of the curve
\[\tilde{\sC}_{2,-1,-2}\colon
y^2=-48(706x^3 + 2118x^2 + 4236x + 353)^2+16944(5x^2 - 14x - 30)^3
\]
has $6$-torsion in $\Sha(\tJ)$ and $\tJ(\QQ)=\{0\}$.
\end{example}
\begin{proof}
Let $\J$ be the Jacobian of
\[\sC_{2,-1,-2}\colon 
y^2=12x^6 + 72x^5 + 312x^4 + 688x^3 + 768x^2 + 192x + 68.\]
A direct computation shows that $\Sel^{\tphi}(\tJ/\QQ)=(\ZZ/3)^4$ and 
$\Sel^{\phi}(\J/\QQ)=0$. The torsion $\J[3](\QQ)=(\ZZ/3)^2$ explains 
two factors, so either $\tJ(\QQ)$ is of rank $2$ or $\Sha(\tJ/\QQ)[3]$ 
is non-trivial.

Similarly, a $2$-descent shows that 
$\Sel^2(\tJ/\QQ)\simeq\Sel^2(\J/\QQ)=(\ZZ/2)^2$. Further computation shows 
that $\Sel^2(\J^{(3)}/\QQ)=(\ZZ/2)^3$ and that $\J^{(3)}(\QQ)\simeq\ZZ^3$.
Further computation shows that $\Sel^2(J/\QQ(\sqrt{3}))=(\ZZ/2)^3$ as well, 
so $\J^{(3)}(\QQ(\sqrt{3}))\simeq\ZZ^3$ as well. It follows that 
$\J(\QQ)\simeq (\ZZ/3)^2$ and that 
$\Sha(\J/\QQ)[2^\infty]=\Sha(\tJ/\QQ)[2^\infty]=(\ZZ/2)^2$.
\end{proof}

\begin{bibdiv}
\begin{biblist}

\bib{magma}{article}{
  title={The Magma algebra system. I. The user language},
  author={Bosma, The MAGMA computer algebra system is described in Wieb},
  author={Cannon, John},
  author={Playoust, Catherine},
  journal={J. Symbolic Comput.},
  volume={24},
  number={3--4},
  pages={235--265},
  date={1997},
}

\bib{bostmestre:richelot}{article}{
  author={Bost, Jean-Beno{\^{\i }}t},
  author={Mestre, Jean-Fran{\c {c}}ois},
  title={Moyenne arithm\'etico-g\'eom\'etrique et p\'eriodes des courbes de genre $1$ et $2$},
  language={French},
  journal={Gaz. Math.},
  number={38},
  date={1988},
  pages={36--64},
  review={\MR {970659 (89k:14072)}},
}

\bib{bruinflynn:vis}{article}{
  author={Bruin, N.},
  author={Flynn, E. V.},
  title={Exhibiting SHA[2] on hyperelliptic Jacobians},
  journal={J. Number Theory},
  volume={118},
  date={2006},
  number={2},
  pages={266--291},
  issn={0022-314X},
  review={\MR {2225283 (2006m:11091)}},
  doi={10.1016/j.jnt.2005.10.007},
}

\bib{bft:electronic}{report}{
  author={Bruin, Nils},
  author={Flynn, E. Victor},
  author={Testa, Damiano},
  title={Electronic resources},
  date={2013},
  eprint={http://www.cecm.sfu.ca/~nbruin/c3xc3},
}

\bib{BPS:quartic}{article}{
  author={Bruin, Nils},
  author={Poonen, Bjorn},
  author={Stoll, Michael},
  title={Generalized explicit descent and its application to curves of genus 3},
  journal={ArXiv preprint 1205.4456},
  eprint={http://arxiv.org/abs/1205.4456},
  date={2012},
}

\bib{Burkhardt1891}{article}{
  author={Burkhardt, Heinrich},
  title={Untersuchungen aus dem Gebiete der hyperelliptischen Modulfunctionen},
  journal={Mathematische Annalen},
  publisher={Springer Berlin / Heidelberg},
  issn={0025-5831},
  pages={161-224},
  volume={38},
  number={2},
  url={http://dx.doi.org/10.1007/BF01199251},
  note={10.1007/BF01199251},
  date={1891},
}

\bib{prolegom}{book}{
  author={Cassels, J. W. S.},
  author={Flynn, E. V.},
  title={Prolegomena to a middlebrow arithmetic of curves of genus $2$},
  series={London Mathematical Society Lecture Note Series},
  volume={230},
  publisher={Cambridge University Press},
  place={Cambridge},
  date={1996},
  pages={xiv+219},
  isbn={0-521-48370-0},
  review={\MR {1406090 (97i:11071)}},
}

\bib{hoffmanweintraub:siegel}{article}{
  author={Hoffman, J. William},
  author={Weintraub, Steven H.},
  title={The Siegel modular variety of degree two and level three},
  journal={Trans. Amer. Math. Soc.},
  volume={353},
  date={2001},
  number={8},
  pages={3267--3305 (electronic)},
  issn={0002-9947},
  review={\MR {1828606 (2003b:11044)}},
  doi={10.1090/S0002-9947-00-02675-1},
}

\bib{lang:abvar}{book}{
  author={Lang, Serge},
  title={Abelian varieties},
  note={Reprint of the 1959 original},
  publisher={Springer-Verlag},
  place={New York},
  date={1983},
  pages={xii+256},
  isbn={0-387-90875-7},
  review={\MR {713430 (84g:14041)}},
}

\bib{milne:abvar}{article}{
  author={Milne, J. S.},
  title={Abelian varieties},
  conference={ title={Arithmetic geometry}, address={Storrs, Conn.}, date={1984}, },
  book={ publisher={Springer}, place={New York}, },
  date={1986},
  pages={103--150},
  review={\MR {861974}},
}

\bib{poosto:casselstate}{article}{
  author={Poonen, Bjorn},
  author={Stoll, Michael},
  title={The Cassels-Tate pairing on polarized abelian varieties},
  journal={Ann. of Math. (2)},
  volume={150},
  date={1999},
  number={3},
  pages={1109--1149},
  issn={0003-486X},
  review={\MR {1740984 (2000m:11048)}},
}

\bib{schaefer:sel}{article}{
  author={Schaefer, Edward F.},
  title={Computing a Selmer group of a Jacobian using functions on the curve},
  journal={Math. Ann.},
  volume={310},
  date={1998},
  number={3},
  pages={447--471},
  issn={0025-5831},
  review={\MR {1612262 (99h:11063)}},
}

\end{biblist}
\end{bibdiv}
\end{document}